\newcommand\sign{\operatorname{sign}}
\newcommand{\calC}{{\mathcal C}}
\newcommand{\sig}{\operatorname{sig}}
\newcommand{\odeg}{\operatorname{odeg}}
\newcommand{\marginnote}[1]{\ifthenelse{\isodd{\thepage}}{\normalmarginpar}
{\reversemarginpar}\marginpar{\fbox{\parbox{18mm}{\sloppy\footnotesize #1}}}}
\newtheorem{definition}{Definition}
\newtheorem{theorem}{Theorem}
\newtheorem{proposition}{Proposition}
\newtheorem{example}{Example}
\newtheorem{remark}{Remark}
\begin{document}

\title[Centrality measures in simplicial complexes]{Centrality measures in Simplicial Complexes: Applications of Topological Data Analysis  to Network Science.}

\date{\today}
\thanks{This work has been supported by Ministerio de Econom\'ia y Competitividad (Spain) and the European Union through FEDER funds under grants TIN2017- 84844-C2-1-R and MTM2017-86042-P, and the project STAMGAD 18.J445 / 463AC03 supported by Consejer\'{\i}a de Educaci\'on (GIR, Junta de Castilla y Le\'on, Spain).}

\subjclass[2020]{55U10, 62R40, 91D30, 05C82, 91C20, 82B43, 05E45 \\ \emph{ PACS numbers:} 89.75.-k, 89.75.Fb, 89.75. Hc} 

\keywords{
complex networks, simplicial complexes, clustering coefficient, topological data analysis, network science, statistical mechanics}

\author[D. Hern\'andez Serrano]{Daniel Hern\'andez Serrano}
\author[D. S\'anchez G\'omez]{Dar\'io S\'anchez G\'omez}

\address{Departamento de Matem\'aticas and Instituto Universitario de F\'isica Fundamental y Matem\'aticas (IUFFyM), Universidad de Salamanca,  Plaza de la Merced 1-4, 37008 Salamanca, Spain}
\email{dani@usal.es, dario@usal.es}

\begin{abstract}
Many real networks in social sciences, biological and biomedical sciences or computer science have an inherent structure of simplicial complexes reflecting many-body interactions. Therefore, to analyse topological and dynamical properties of simplicial complex networks centrality measures for simplices need to be proposed. Many of the classical complex networks centralities are based on the degree of a node, so in order to define degree centrality measures for simplices (which would characterise the relevance of a simplicial community in a simplicial network), a different definition of adjacency between simplices is required, since, contrarily to what happens in the vertex case (where there is only upper adjacency), simplices might also have other types of adjacency. The aim of these notes is threefold: first we will use the recently introduced notions of higher order simplicial degrees to propose new degree based centrality measures in simplicial complexes. These theoretical centrality measures, such as the simplicial degree centrality or the eigenvector centrality would allow not only to study the relevance of a simplicial community and the quality of its higher-order connections in a simplicial network, but also they might help to elucidate topological and dynamical properties of simplicial networks; sencond,  we define notions of walks and distances in simplicial complexes in order to study connectivity of simplicial networks and to generalise, to the simplicial case, the well known closeness and betweenness centralities (needed for instance to study the relevance of a simplicial community in terms of its ability of transmitting information); third, we propose a new clustering coefficient for simplices in a simplicial network, different from the one knows so far and which generalises the standard graph clustering of a vertex. This measure should be essential to know the density of a simplicial network in terms of its simplicial communities.
\end{abstract}

\maketitle


\section{Introduction} \label{intro}

Real world networks and data systems have been usually understood as graphs, where nodes represents the agents of the network and edges are drawn to keep track of the binary interactions between these agents. They present a complex behaviour, since we can not always infer global properties based on local ones, and Network Science and Statistical Mechanics of complex networks (\cite{BR02, BA16}) offer a universal language which allow to classify them, elucidate patterns of interactions and make predictions about the structure and evolution of such systems. But many real complex systems, such as social systems, data systems, brain networks or other biological complex systems, have a much richer inherent structure due to the fact that multi-interactions among agents are allowed. From this point of view, Topological Data Analysis (TDA) provides a mathematical tool (coming from algebraic topology) to deal with and faithfully represent multi-interactions among agents: the simplicial complexes. A simplex is just a set of different vertices and a simplicial complex is, roughly speaking, a collection $K$ of simplices which contains together with each simplex all its smaller subsets of vertices (called the faces of the simplex). Hence, simplices can be understood as higher dimensional generalisations of a point, line, triangle, tetrahedron, and so on, and they can codify multi interaction relations in classical networks (binary relations are lines or $1$-simplices, ternary relations are triangles or $2$-simplices, ...). The applications of TDA have been fruitful and varied during the last 15 years and have mainly (but not only) focused on the study of network topology and its evolution using homology and persistent homology of simplicial complexes; see for instance \cite{BASJK18,BK19,BC16,CH13,MDS15,MR12,MR14,PGV17} for applications in social systems, \cite{ER18,GH15,KFH16,XW14,XW15} for biomolecular systems, \cite{CL18,GGB15,GPCI15,LC19,P14,SPGB18} for brain networks, \cite{ME06} for network control or \cite{Ghrist08} for a survey on algebraic topology tools and data.

One of the most successful tools for analysing topological and dynamical properties of complex networks is the use of centrality measures: numeric quantifications of the relevance of a node attending to different parameters (such as position, number of connections, number of edges passing through it, number of links among its neighbours, ...). The concept of centrality has its origins in mathematical modeling of certain aspects and theories in sociology and psychology, and was introduced by Bavelas in \cite{Bav48} (which was  concerned with communication in small groups) and the notion of centrality index was formalised in \cite{Le51}. Their results show that centrality was well related to the efficiency of a group in problem-solving and the perception of the relevant agents in a network, thus opening the door to numerous applications and experiments in the following decades. A precise mathematical formulation of this framework was given in \cite{Sab66} and the ideas of Bavela's group have been used in several fields such as social studies (see for instance \cite{Free78} for a detailed review of the implications of the Bavela group's ideas and some of the first definitions of centrality measures in social networks). 

We aim to develop, from a theoretical  point of view, a mathematical framework in which to study centrality measures in simplicial complexes. If one attempts to define centrality measures based on a notion of degree for simplices (which would allow to characterise certain relevance of an agent, or group of agents, in a simplicial network), a different definition of adjacency between simplices is required, since, contrarily to what happens in the vertex case (where there is only upper adjacency between vertices connected by an edge), triangles, for example, have lower adjacent lines and vertices (at least all of its faces) and also might have upper adjacent simplices (if it is nested in a tetrahedron, for instance). In \cite{G02, MDS15, MR12,MR14,ER18} definitions of lower, upper and general adjacency for $q$-simplices  were given: two simplices $\sigma^{(q)}$ and $\sigma^{(q)}$ are lower adjacent if there exists a $(q-1)$-simplex $\tau^{(q-1)}$ which is a common $(q-1)$-face of both of them; they are said to be upper adjacent if there exists $\tau^{(q+1)}$ having both as $q$-faces; and they are considered adjacent if they are strictly lower adjacent but not upper adjacent. The associated degrees for these notions of adjacencies in simplicial complexes can be effectively computed by the entries of a matrix associated with a $q$-combinatorial Laplacian operator. Applications of these simplicial adjacencies and degrees can be found in \cite{G02, MR12, MR14} for theoretical or social-networks implications, \cite{ER18} for protein interactions or \cite{HJ13, MS13,PR17} for spectral theory and random walks on simplicial complexes.

But those definitions do not allow us to lower compare two simplices at different dimensions $q$ and $q'$ which share a $p$-face for general $p$, nor do they permit us to upper compare them if they are faces of a bigger $p'$-simplex. In \cite{HHS20} we have presented a mathematical framework generalising the notions of lower, upper and general adjacency and their associated degrees, which is valid for any simplicial dimension comparison: two simplices $\sigma^{(q)}$ and $\sigma^{(q')}$ are $p$-lower adjacent if there exists a $p$-simplex $\tau^{(p)}$ which is a common $p$-face of both of them; they are said to be $p$-upper adjacent if there exists $\tau^{(p)}$ having both as faces; and they are considered $p$-adjacent if they are strictly $p$-lower adjacent (meaning $p$-lower adjacent and not $(p+1)$-lower adjacent) but not $p'$-upper adjacent for a certain (explicitly given) dimension $p'$. The associated degrees to these notions of adjacencies are defined, and they are proved to generalise the usual ones. Moreover, a new higher order multi combinatorial Laplacian operator in an oriented simplicial complex is defined (by using a novel multi-parameter boundary operator) and it is used to effectively compute all the higher order degrees there defined. 

In this paper we apply the results given in \cite{HHS20} to introduce new centrality measures in simplicial complexes, which would allow to study the relevance of certain communities in a simplicial network and might be useful in studying topological and dynamical properties of complex systems. We start by defining centrality measures associated with the given notions of higher order degrees,  needed to understand the relevance and higher order relations among different collaborative simplicial communities and to study higher order degree distributions in simplicial complex networks. In fact, we have already revealed in \cite{HHS20} the potential practical utility of a simplicial degree centrality  measure (which we will define in these notes), since an structural analysis of higher-order connectivity of several real-world datasets (including coauthor networks, cosponsoring congress bills, contacts in schools, drug abuse warning networks, e-mail networks or publications and users in online forums) is performed is made there by studying statistical properties and the degree distributions associated with the new simplicial degrees. A rich and varied higher-order connectivity structures are shown to exists and real-world datasets of the same type reflect similar higher-order patterns. In addition, we will propose in this paper a generalized simplicial eigenvector centrality measure, extending in a natural way the one given in \cite{ER18}, and which allows to identify those $q$-simplices in a simplicial network that are p-adjacent to many other well-connected $q$-simplices. We also define the notions of walks and distances in simplicial complexes, different to that of \cite{MRV08,ER18}, to study connectivity and to generalise, to the simplicial case, the well-known closeness and betweenness centralities. These measures should be needed, for example, to study the relevance of a simplicial community in terms of its ability of transmitting information. Finally, we define a clustering coefficient for simplices in a simplicial complex different to that of \cite{MRV08}. This measure generalises the standard graph clustering of a vertex in a graph network, and allows to know not only the simplicial clustering around a node, but the clustering around a simplicial community. As far as a topological analysis of a simplicial network is concerned, this new simplicial clustering could be a useful tool to study a notion of simplicial density in a simplicial network and to reveal the existence and location of well-connected simplicial hubs (in classical Network Science, hubs are nodes with a high number of connections).

The paper is organised as follows. Section \ref{s:multi} is devoted to recall the basic definitions of simplicial complexes and the results of \cite{HHS20}. We start by giving the notions of lower, upper and general adjacency for simplices (valid for any dimensional comparison), its associated higher order degrees and how to effectively compute them by using the multi parameter boundary and combinatorial Laplacian operators.  The main results are tackled in Section \ref{s:cent}: centrality measures associated with the higher order simplicial degrees are introduced,  such as the simplicial degree centralities and the $p$-order simplicial eigenvector centrality; the notions of walks and distances in simplicial complexes are presented to study connectivity for simplicial complexes and to generalise, to the simplicial case, the closeness and betweenness centralities. Finally, we define in this section a clustering coefficient for simplices making use of both the higher order degree and the generalised distance in a simplicial complex. We end with Section \ref{s:conc} summarizing the main results and their potential practical applications within the simplicial network science.

\section{General adjacency, general simplicial degree and multi combinatorial Laplacian}\label{s:multi}\quad

We start by recalling the basic notions and properties of  simplicial complexes (we refer to  \cite{Mun84,G02} for a wide exposition and details), and then we will summarise the main results of \cite{HHS20}.

Given a finite set of points $\{v_0,v_1,\dots ,v_n\}$, which we call vertices, a $q$-simplex is a subset of vertices $\sigma^{(q)}=\{v_0,v_1,\dots ,v_q\}$ such that $v_i\neq v_j$ for all $i\neq j$ (where $q$ is referred to as the dimension of $\sigma^{(q)}$). A $p$-face (for $p\leq q$) of a $q$-simplex is just a subset $\{v_{i_0},\dots, v_{i_p}\}$ of the $q$-simplex. A simplicial complex $K$ is a collection of simplices such that if $\sigma$ is a simplex in $K$, then all the faces of $\sigma$ are also in $K$, and non-empty intersection of any two simplices of $K$ is a face of each of them. If a simplex is not a face of any other simplex, then it is called a facet. The dimension of a simplicial complex, $\dim K$, is the maximum among the dimensions of its simplices.

Recall that the degree of a vertex is the number of its incident edges, and this notion can be generalised to q-simplices. Notice that as a $0$-simplex, a vertex has degree $d$ if there are $d$ edges, $1$-simplices, incident to it, but a $1$-simplex have two $0$-simplices adjacent to it (the two vertices the edge has) but it also might be adjacent to a $2$-simplex (triangle) or to a $3$-simplex (tetrahedron). Then, notions of lower and upper adjacency (and its associated degrees) valid for any dimensional comparison between simplices are needed. These definitions are proposed in \cite{HHS20} and, since we will need them in order to define new centrality measures in simplicial complexes, we shall recall them later on in this section. But before getting into the simplicial case, let us recall that in graph network theory the degree of a vertex also appears as a diagonal entry of the graph Laplacian matrix, defined as $D-A$, where $D$ is a diagonal matrix with the degree of the vertices as diagonal entries, and $A$ is the usual vertex adjacency matrix. That is, one has an effective computational method to calculate the graph degree. In our general setting there also exists such a computational method by using the multi combinatorial Laplacian of \cite{HHS20} (which generalises the graph Laplacian and the $q$-combinatorial Laplacian). In order to define a combinatorial Laplacian, an orientation is required in the simplicial complex $K$: let $\sigma$ be a simplex in $K$, we define two orderings of its vertex set to be equivalent if they differ from one another by an even permutation. If $\dim\sigma >0$, this relations provides two equivalence classes and each of them is called an orientation of $\sigma$. An oriented simplex is a simplex $\sigma$ together with an orientation of $\sigma$. For a geometrically independent set of points $\{v_0, v_1,\dots, v_q\}$ we denote by $[v_0,\dots, v_q]$ and $-[v_0,\dots, v_q]$ the opposite oriented simplices spanned by $\{v_0, v_1,\dots, v_q\}$. We say that a finite simplicial complex $K$ is oriented if all of its simplices are oriented. We shall denote by $\widetilde{S}_p(K)$ the set of oriented $p$-simplices of the simplicial complex $K$, and by $S_p(K)$ the set of non oriented $p$-simplices.

Let us state few more notations and definitions. Given and oriented simplicial complex $K$, we define the  group of $q$-chains as the free abelian group $C_q(K)$ with basis the set of oriented $q$-simplices of $K$. The dimension $f_q$ of $C_q(K)$ is the number of $q$-dimensional simplices of the simplicial complex $K$, and it is codified in a topological invariant called the $f$-vector $f=(f_0,f_1,\dots ,f_q, \dots ,f_{\dim K})$. By assumption $C_q(K)$ is trivial if $q\notin[0,\dim K]$.

Let $C^q(K)$ the dual vector space of $C_q(K)$. Its elements, called \emph{cochains}, are completely determined by specifying its value on each simplex (since chains are linear combinations of simplices). Fixing an auxiliary inner product (with respect to which the basis of $C_q(K)$ can be chosen to be orthonormal), we can identify (via the associated polarity): $C_q(K)\simeq C^q(K)$.

Now we have the basic set up to recall the results of \cite{HHS20}.

\subsection{Higher order adjacency and simplicial degrees}\label{sec:higheradj}\quad 

Let us show how to lower and upper compare simplices of different dimensions. 

\begin{definition}\label{d:qhLoAdj} Lower adjacencies and lower degrees. \quad 
\begin{enumerate}
\item We say that $\sigma^{(q)}$ and $\sigma^{(q')}$ are $p$-lower adjacent if there exists a $p$-simplex $\tau^{(p)}$ which is a common face of both $\sigma^{(q)}$ and $\sigma^{(q')}$:
$$\sigma^{(q)}\sim_{L_p}\sigma^{(q')}\,\iff\, \exists\,\, \tau^{(p)} \quad \colon\quad  \tau^{(p)}\subseteq \sigma^{(q)}\quad \&\quad  \tau^{(p)}\subseteq \sigma^{(q')}\,.$$

Note that if $\sigma^{(q)}\sim_{L_p}\sigma^{(q')}$, then $\sigma^{(q)}\sim_{L_{p'}}\sigma^{(q')}$ for all $0\leq p'\leq p$. 

\item We say that $\sigma^{(q)}$ and $\sigma^{(q')}$ are strictly $p$-lower adjacent if: 
$$\sigma^{(q)}\sim_{L_{p^*}}\sigma^{(q')}\quad \iff \quad \sigma^{(q)}\sim_{L_p}\sigma^{(q')}\quad \mbox{ and }\quad \sigma^{(q)}\not \sim_{L_{p+1}}\sigma^{(q')}\,.$$ 

\item We define the $p$-lower degree of a $q$-simplex $\sigma^{(q)}$ as: 
$$\deg^{p}_L(\sigma^{(q)}):=\#\{\sigma^{(q')}\,\colon \, \sigma^{(q')}\sim_{L_p}\sigma^{(q)} \}\,.$$

\item We define the strictly $p$-lower degree of a $q$-simplex $\sigma^{(q)}$ as:
$$\deg^{p^*}_L(\sigma^{(q)}):=\#\{\sigma^{(q')}\,\colon\, \sigma^{(q')}\sim_{L_{p^*}}\sigma^{(q)} \}\,.$$

\item We define the $(h,p)$-lower degree of a $q$-simplex $\sigma^{(q)}$ as:
$$\deg^{h,p}_L(\sigma^{(q)}):=\#\{\sigma^{(q-h)}\,\colon\, \sigma^{(q-h)}\sim_{L_p}\sigma^{(q)}\}\,.$$

\item We define the strict $(h,p)$-lower degree of a $q$-simplex $\sigma^{(q)}$ as:
$$\deg^{h,p^*}_L(\sigma^{(q)}):=\#\{\sigma^{(q-h)}\,\colon\, \sigma^{(q-h)}\sim_{L_{p^*}}\sigma^{(q)}\}\,.$$

\end{enumerate}
\end{definition}

We have the following properties:
\begin{itemize}
\item Note that: 
$$\deg^{p}_L(\sigma^{(q)})=\displaystyle\sum_{h=q-\dim K}^{q-p}\deg^{h,p}_L(\sigma^{(q)})$$ and $$\deg^{p^*}_L(\sigma^{(q)})=\displaystyle\sum_{h=q-\dim K}^{q-p}\deg^{h,p^*}_L(\sigma^{(q)})\,.$$
\item If $h=1$ and $p=q-h=q-1$ then the $(1,q-1)$-lower degree of a $q$-simplex is the lower degree of the $q$-simplex of \cite{G02,ER18}:
\begin{align*}
\deg^{1,q-1}_L(\sigma^{(q)})&:=\#\{\tau^{(q-1)}\,|\, \tau^{(q-1)}\sim_{L_{q-1}}\sigma^{(q)}\}=\\
&=\#\{(q-1)-\mbox{faces of }\sigma^{(q)}\}=q+1
\end{align*}

\item If $p=q-h$ we have that:
\begin{equation}\label{e:lowfaces}
\begin{aligned}
\deg^{h,q-h}_L(\sigma^{(q)})&:=\#\{\tau^{(q-h)}\,|\, \tau^{(q-h)}\sim_{L_{q-h}}\sigma^{(q)}\}=\\
&=\#\{(q-h)-\mbox{simplices of }\sigma^{(q)}\}=\binom{q+1}{q-h+1}
\end{aligned}
\end{equation}

\item From the very definition we have that: 
\begin{equation}\label{e:stlowdeg}
\deg^{h,p^*}_L(\sigma^{(q)})=\deg^{h,p}_L(\sigma^{(q)})-\deg^{h,p+1}_L(\sigma^{(q)})\,.
\end{equation}
\end{itemize}

\begin{definition}\label{d:hpUdeg}Upper adjacencies and upper degrees. \quad 

\begin{enumerate}

\item We say that $\sigma^{(q)}$ and $\sigma^{(q')}$ are $p$-upper adjacent if there exists a $p$-simplex $\tau^{(p)}$ having both $\sigma^{(q)}$ and $\sigma^{(q')}$ as faces:
$$\sigma^{(q)}\sim_{U_p}\sigma^{(q')}\,\iff\, \exists\,\, \tau^{(p)} \quad \colon\quad  \sigma^{(q)}\subseteq \tau^{(p)} \quad \&\quad  \sigma^{(q')}\subseteq \tau^{(p)}\,.$$

\item We say that $\sigma^{(q)}$ and $\sigma^{(q')}$ are strictly $p$-upper adjacent if:
$$\sigma^{(q)}\sim_{U_{p^*}}\sigma^{(q')}\quad \iff \quad \sigma^{(q)}\sim_{U_p}\sigma^{(q')}\quad \mbox{ and }\quad \sigma^{(q)}\not \sim_{U_{p+1}}\sigma^{(q')}\,.$$ 

\item We define the $p$-upper degree of a $q$-simplex $\sigma^{(q)}$ as: 
$$\deg^{p}_U(\sigma^{(q)}):=\#\{\sigma^{(q')}\,\colon\, \sigma^{(q')}\sim_{U_p}\sigma^{(q)}\}\,.$$

\item We define the strictly $p$-upper degree of a $q$-simplex $\sigma^{(q)}$ as: 
$$\deg^{p^*}_U(\sigma^{(q)}):=\#\{\sigma^{(q')}\,\colon\, \sigma^{(q')} \sim_{U_{p^*}}\sigma^{(q)}\}\,.$$

\item We define the $(h,p)$-upper degree of a $q$-simplex $\sigma^{(q)}$ as:
$$\deg^{h,p}_U(\sigma^{(q)}):=\#\{\sigma^{(q+h)}\,\colon\, \sigma^{(q+h)}\sim_{U_p}\sigma^{(q)}\}\,.$$

\item We define the strict $(h,p)$-upper degree of a $q$-simplex $\sigma^{(q)}$ as:
$$\deg^{h,p^*}_U(\sigma^{(q)}):=\#\{\sigma^{(q+h)}\,\colon\, \sigma^{(q+h)}\sim_{U_{p^*}}\sigma^{(q)}\}\,.$$
\end{enumerate}
\end{definition}

We have the following properties and results:
\begin{itemize}
\item Note that: 
$$\deg^{p}_U(\sigma^{(q)})=\displaystyle\sum_{h=-q}^{p-q}\deg^{h,p}_U(\sigma^{(q)})$$ and $$\deg^{p^*}_U(\sigma^{(q)})=\displaystyle\sum_{h=-q}^{p-q}\deg^{h,p^*}_U(\sigma^{(q)})\,.$$
\item If $h=1$ and $p=q+h=q+1$ then the $(1,q+1)$-upper degree of a $q$-simplex is the upper degree of the $q$-simplex of \cite{G02,ER18}. 

\item For $q=0$, $h=1$ and $p=q+h=1$ then the $(1,1)$-upper degree of a vertex $v$ is the usual degree:
$$\deg^{1,1}_U(v)=\#\{\mbox {edges incident to }v\}=\deg (v)\,.$$

\item In general, for $p=q+h$, we have: 
$$\deg^{h,q+h}_U(\sigma^{(q)})=\#\{(q+h)\mbox {-simplices incident to }\sigma^{(q)}\}\,.$$

\item The strict upper degree can be computed as follows:
\begin{equation}\label{e:strictdegform}
\deg^{h,(q+h)^*}_U(\sigma^{(q)})=\sum_{i=0}^s (-1)^i \deg^{h+i,q+h+i}_U(\sigma^{(q)})\cdot \binom{h+i}{h}\,,
\end{equation}
where $s=\dim K -(q+h)$.
\end{itemize}

\begin{remark}\label{rem:Lp}
Let us point out some comments. 
\begin{enumerate}
\item As mentioned above, if $q=q'$ and $p=q-1$ (resp. $p=q+1$), then the notion $(q-1)$-lower (resp. $(q+1)$-upper) adjacency recovers the ordinary lower (resp. upper) adjacency for $q$-simplices of \cite{G02,ER18}. Thus, $(q+1)$-upper adjacency implies $(q-1)$-lower adjacency for $q$-simplices. However, in contrast to the ordinary case, the uniqueness of the common lower (resp. upper) simplex is no longer true in our general setting.

\item
If $h\geq 0$ and $\sigma^{(q)}\sim_{U_{q+h}}\sigma^{(q+h)}$, then $\sigma^{(q)}$ is a face of $\sigma^{(q+h)}$ and thus $\sigma^{(q)}\sim_{L_{q}}\sigma^{(q+h)}$. 
\item Although, in general is no longer true that $p$-upper adjacency implies $p'$-lower adjacency, one has that for $q\geq q'\geq h$ if $\sigma_i^{(q)}\sim_{U_{q+h}}\sigma_j^{(q')}$, then $\sigma_i^{(q)}\sim_{L_{q'-h}}\sigma_j^{(q')}$.
\end{enumerate}
\end{remark}

Notice that if $\sigma_i^{(q)}\sim_{L_{p^*}}\sigma_j^{(q')}$, then $\sigma_i^{(q)}$ and $\sigma_j^{(q')}$ share $(p+1)$-vertices. Thus, the smallest simplex which might contain both as faces (and therefore all of their vertices) has to have $q+1+q'+1-(p+1)=q+q'-p+1$ vertices, and thus it should be a $p'$-simplex where $p'=q+q'-p$. Moreover, in \cite{HHS20} it is proven that if  $\sigma_i^{(q)}\sim_{L_{p^*}}\sigma_j^{(q')}$ for some $p$ and $\sigma_i^{(q)}\not\sim_{U_{p'}}\sigma_j^{(q')}$, then $\sigma_i^{(q)}\not\sim_{U_{p'+h}}\sigma_j^{(q')}$ for all $h\geq 1$, where $p'=q+q'-p$. 

This justifies the following definitions. 

\begin{definition}\label{d:qhAdj}General adjacencies and their associated degrees. \quad 
\begin{enumerate}
\item We say that $\sigma^{(q)}$ and $\sigma^{(q')}$ are $p$-adjacent if they are strictly $p$-lower adjacent and not $p'$-upper adjacent, for $p'=q+q'-p$:
$$\sigma^{(q)}\sim_{A_p} \sigma^{(q')}\iff \sigma^{(q)}\sim_{L_{p^*}} \sigma^{(q')} \quad \& \quad \sigma^{(q)}\not\sim_{U_{p'}} \sigma^{(q')}.$$
In order to agree with graph theory, for $q=0$ we say that two vertices $v_i$ and $v_j$ are adjacent if $v_i\sim_{U_1} v_j$.

\item We say that $\sigma^{(q')}$ is maximal $p$-adjacent to $\sigma^{(q)}$ if:
\begin{align*}
\sigma^{(q')}\sim_{A_{p^*}} \sigma^{(q)} & \iff \sigma^{(q')}\sim_{A_p} \sigma^{(q)} \quad \&\\ & \quad \sigma^{(q')}\not \subset \sigma^{(q'')} \quad \forall\quad \sigma^{(q'')}\,|\, \sigma^{(q'')}\sim_{A_p} \sigma^{(q)}\,.
\end{align*}

\item We define the $p$-adjacency degree of a $q$-simplex $\sigma^{(q)}$ by:  
$$\deg^p_A(\sigma^{(q)}):=\#\{\sigma^{(q')} \,|\,\sigma^{(q)}\sim_{A_p} \sigma^{(q')}\}\,.$$

\item We define the maximal $p$-adjacency degree of a $q$-simplex $\sigma^{(q)}$ by:  
$$
\deg^{p*}_A(\sigma^{(q)}):=\#\{\sigma^{(q')} \,|\,\sigma^{(q')}\sim_{A_{p^*}} \sigma^{(q)}\}\,.
$$
\end{enumerate}
\end{definition}

\begin{figure}[!h]
\centering
\includegraphics[scale=0.7]{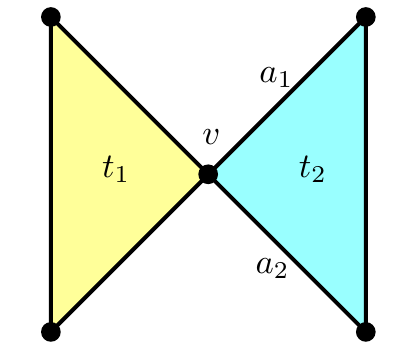}
\caption{Maximal $0$-adjacency.}
\label{fig:MaxAdj}
\end{figure}

Let us remark that with the $p$-adjacency degree we might be over counting certain simplices in the following sense: in Figure \ref{fig:MaxAdj} we have a triangle $t_1$ to which another triangle $t_2$ is $0$-adjacent in the vertex $v$, and, since there are two edges ($1$-faces) of $t_2$ that are $0$-adjacent to $t_1$ (the edges $a_1$ and $a_2$), they are also being counted in the $0$-adjacency degree of $t_1$. That is $\deg^0_A(t_1)=3$ because $t_2$, $a_1$ and $a_2$ are $0$-adjacent to $t_1$. Nonetheless, since $a_1$ and $a_2$ are $0$-adjacent to $t_1$, but they are also faces of another simplex $0$-adjacent to $t_1$ (they are faces of $t_2$), then, with the maximal $0$-adjacent degree of $t_1$ we are not counting $a_1$ and $a_2$, and thus $\deg^{0^*}_A(t_1)=1$. Thus, the maximal $p$-adjacency degree for a $q$-simplex only counts the maximal collaborative communities $p$-adjacent to a given simplex.

In order to count all the communities collaborating with a simplex (both the ones collaborating with  smaller sub-communities of the simplex and also the bigger ones where the simplex is nested in), a two parameter degree can be defined using both the $p_1$-adjacency degree and the $p_2$-upper degree as follows.

\begin{definition}\label{d:p1p2deg}
Given $p_1>q$ and $p_2<q$, we define the $(p_1,p_2^*)$-degree of a $q$-simplex $\sigma^{(q)}$ by:
$$\deg^{(p_1,p_2^*)}(\sigma^{(q)}):=\deg^{p_1}_U(\sigma^{(q)})+\deg^{p_2^*}_A(\sigma^{(q)})\,.$$
Similarly, for strict upper degree, we define the $(p_1^*,p_2^*)$-degree of a $q$-simplex $\sigma^{(q)}$ by:
$$\deg^{(p_1^*,p_2^*)}(\sigma^{(q)}):=\deg^{p_1^*}_U(\sigma^{(q)})+\deg^{p_2^*}_A(\sigma^{(q)})\,.$$
\end{definition}

Finally, we can get rid of the two parameters by defining a maximal simplicial degree of a $q$-simplex, which counts all the maximal communities collaborating with the sub-communities (faces) of the $q$-simplex (the ones that are maximal $p$-adjacent) and also the maximal communities on which the $q$-simplex is nested in (these ones being strictly upper adjacent).

\begin{definition}\label{d:simpdeg}
We define the maximal simplicial degree of $\sigma^{(q)}$ by:
$$\deg^*(\sigma^{(q)})=\deg^*_A(\sigma^{(q)})+\deg^*_U(\sigma^{(q)})\,,$$ 
where: 
$$\deg^*_A(\sigma^{(q)}):=\sum_{p=0}^{q-1}\deg^{p^*}_A(\sigma^{(q)})\,\text{ and }\,\deg^*_U(\sigma^{(q)}):=\sum_{h=1}^{\dim K-q}\deg^{h,(q+h)^*}_U(\sigma^{(q)})\,.$$
\end{definition}

\subsection{Generalized boundary operator and multi combinatorial Laplacian}\label{s:qhLap}\quad

Let us recall the generalised boundary operator on an oriented simplicial complex and the higher order multi combinatorial Laplacian, which will give us a way to effectively compute the higher order degrees of the previous subsection.

Let $\sigma^{(q)}$ be a $q$-simplex spanned by the set of points $\{v_0,\dots,v_q\}$. Let us denote $[v_0,\dots,\hat{v_i},\dots, v_q]$ the oriented $(q-1)$-simplex obtained from removing the vertex $v_i$ in $[v_0,\dots, v_q]$.

Given the $(q-h)$-face $\sigma^{(q-h)}$ of $\sigma^{(q)}$ spanned by $\{v_{0},\dots,\widehat v_{{j_1}},\dots,\widehat v_{{j_h}},\dots,v_{q}\}$ let us  denote by $\epsilon_{j_1\cdots j_h}$ the permutation 
$\begin{pmatrix}
0&\cdots& h-1&h&\cdots&q \\ 
{j_1}&\cdots&  {j_h}&0&\cdots & {q}
\end{pmatrix}$. As oriented $q$-simplex, $\sigma^{(q)}$ is represented by $[v_{\eta(0)},\dots,v_{\eta(q)}]$, for some permutation $\eta$ in the set of its vertices. 

\begin{definition}\label{d:qhBoundOp}
We define the {$(q,h)$-boundary operator} $$\partial_{q,h}\colon \calC_q(K)\to \calC_{q-h}(K)$$ as the homomorphism given as the linear extension of:

$$\partial_{q,h}([v_{\eta(0)},\dots, v_{\eta(q)}])=\sum_{j_{1},\dots,j_{h}} \sign(\eta)\sign(\epsilon_{j_1\cdots j_h})[v_0,\dots,\widehat{v_{j_1}},\dots, \widehat{v_{j_{h}}},\dots ,v_q]$$
where $[v_0,\dots,\widehat{v_{j_1}},\dots, \widehat{v_{j_{h}}},\dots ,v_q]$ denotes the oriented $q$-simplex obtained from removing the vertices $v_{j_1},\dots, v_{j_{h}}$ in $[v_0,\dots, v_q]$. 

\end{definition}
Note that $[v_{\eta(0)},\dots,v_{\eta(q)}]=[v_{\eta'(0)},\dots,v_{\eta'(q)}]$ if and only if $\sign (\eta)=\sign (\eta')$, so that $[v_{\eta(0)},\dots,v_{\eta(q)}]=\sign(\eta)\,[v_0,\dots,v_q]$. Then, this operator is well defined and $\partial_{q,h}(-\sigma^{(q)})=-\partial_{q,h}(\sigma^{(q)})$.
\begin{remark}
For $h=1$, the operator $\partial_{q,1}$ is the ordinary $q$-boundary operator $\partial_q\colon C_q(K)\to C_{q-1}(K)$ defined by (see for instance \cite{G02}): 
$$\partial_q([v_0,\dots, v_q])=\sum_{i=0}^q(-1)^i[v_0,\dots,\hat{v_i},\dots, v_q]\,.$$
\end{remark}

\begin{figure}[!h]
\centering
\includegraphics[scale=1.3]{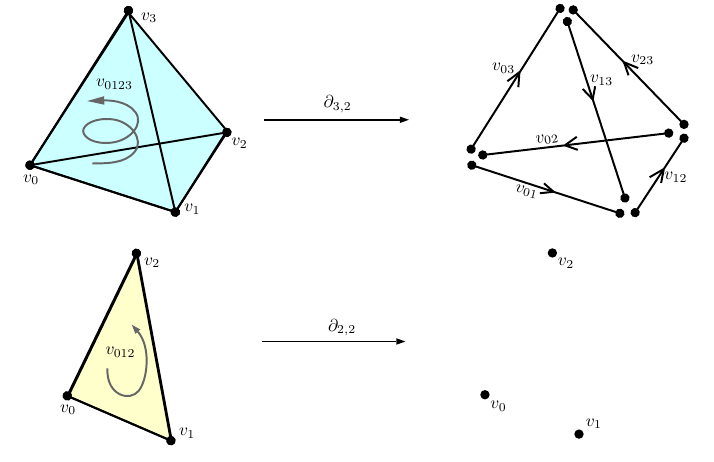}
\caption{Examples of $(q,h)$-boundary operators where we denote $[v_0,\dots, v_p]=v_{0\cdots p}$.}
\label{fig:Lqh}
\end{figure}

Given $\tau^{(p)}$ a $p$-simplex and $\sigma^{(q)}$ a $q$-face in $K$, with $q<p$, we denote by $\sign\big(\tau^{(p)},\sigma^{(q)}\big)$ the coefficient of $\sigma^{(q)}$ in the sum $\partial_{p,p-q}(\tau^{(p)})$.

\begin{definition}Upper similarity orientation, sign and upper oriented degree.\quad 
\begin{enumerate}
\item Let $\sigma_i^{(q)}$ and $\sigma_j^{(q')}$ be two simplices which are $p$-upper adjacent. Let $\tau^{(p)}$ be a common upper $p$-simplex. We say that $\sigma_i^{(q)}$ and $\sigma_j^{(q')}$ are {upper similarly oriented with respect to $\tau^{(p)}$} if $\sign\big(\tau^{(p)},\sigma^{(q)}\big)=\sign\big(\tau^{(p)},\sigma^{(q')}\big)$.

We shall denote it by $\sigma_i^{(q)}\sim_{U_{\tau^{(p)}}^+}\sigma_j^{(q')}$. If the signs are different, we say that they are { dissimilarly oriented} with respect to $\tau^{(p)}$. We shall denote it by $\sigma_i^{(q)}\sim_{U_{\tau^{(p)}}^-}\sigma_j^{(q')}$.

\item Let $\sigma_i^{(q)},\sigma_j^{(q')}$  and $\tau^{(p)}$ oriented simplices. We define the {upper sign} of $\sigma_i^{(q)}$ and $\sigma_j^{(q')}$ with respect to $\tau^{(p)}$ as the following function:
$$
\sig_U(\sigma_i^{(q)},\sigma_j^{(q')};\tau^{(p)}):=\begin{cases}
0 & \mbox{ if } \sigma_i^{(q)}\cup\sigma_j^{(q')}\nsubseteq\tau^{(p)}\\
1 & \mbox{ if } \sigma_i^{(q)}\sim_{U_{\tau^{(p)}}^+}\sigma_j^{(q')} 
\\
-1 & \mbox{ if } \sigma_i^{(q)}\sim_{U_{\tau^{(p)}}^-}\sigma_j^{(q')}
\end{cases}
$$

\item Let $\sigma_i^{(q)}$ and $\sigma_j^{(q')}$ two simplices. 

We define the $p$-{upper oriented degree} of $\sigma_i^{(q)}$ and $\sigma_j^{(q')}$ as the following sum:
$$\odeg^p_U(\sigma_i^{(q)},\sigma_j^{(q')}):=\frac{1}{2}\sum_{\tau^{(p)}\in \widetilde{S}_p(K)}\sig_U(\sigma_i^{(q)},\sigma_j^{(q')};\tau^{(p)})\,.$$
where by $\widetilde{S}_p(K)$ we denote the set of oriented $p$-simplices of the simplicial complex $K$. Note that we are dividing by $2$ in the oriented upper degree since we have $\sign\big(\sigma^{(q)},\tau^{(p)}\big)=\sign\big(\sigma^{(q)},-\tau^{(p)}\big)$.

\end{enumerate}

\end{definition}

Let us point out some comments and properties:
\begin{itemize}
\item For $p=q+1$ the first definition recovers the upper similarity of \cite{G02}.

\item Assume that $\tau^{(q)}$ is a $q$-simplex and $\sigma^{(p)}$ is a $p$-face of $\tau^{(q)}$, then:
$$\odeg_U^q(\sigma^{(p)},\tau^{(q)})=\sig_U(\sigma^{(p)},\tau^{(q)};\tau^{(q)})\,.$$
\end{itemize}

Let us now recall the analogous definitions for the lower adjacencies. 

\begin{definition} Lower similarity orientation, sign and lower oriented degree.\quad
\begin{enumerate}
\item Let $\sigma_i^{(q)}$ and $\sigma_j^{(q')}$ two simplices which are $p$-lower adjacent and  $\tau^{(p)}$ be a common lower $p$-face. We say that $\sigma_i^{(q)}$ and $\sigma_j^{(q')}$ are { lower similarly oriented with respect to $\tau^{(p)}$} if $\sign\big(\sigma^{(q)},\tau^{(p)}\big)=\sign\big(\sigma^{(q')},\tau^{(p)}\big)$.

If the signs are different, we say that they are {dissimilarly oriented} with respect to $\tau^{(p)}$. As before, we shall denote it by $\sigma_i^{(q)}\sim_{L_{\tau^{(p)}}^+}\sigma_j^{(q')}$ and $\sigma_i^{(q)}\sim_{L_{\tau^{(p)}}^-}\sigma_j^{(q')}$, respectively.

\item Let $\sigma_i^{(q)}$ and $\sigma_j^{(q')}$ two simplices. We define the {lower sign} of $\sigma_i^{(q)}$ and $\sigma_j^{(q')}$ with respect to a $p$-simplex $\tau^{(p)}$ as the following function:
$$
\sig_L(\sigma_i^{(q)},\sigma_j^{(q')};\tau^{(p)}):=\begin{cases}
0 & \mbox{ if } \tau^{(p)}\nsubseteq\sigma_i^{(q)}\cap\sigma_j^{(q')}\\
1 & \mbox{ if } \sigma_i^{(q)}\sim_{L_{\tau^{(p)}}^+}\sigma_j^{(q')} 
\\
-1 & \mbox{ if } \sigma_i^{(q)}\sim_{L_{\tau^{(p)}}^-}\sigma_j^{(q')} 
\end{cases}
$$

\item Let $\sigma_i^{(q)}$ and $\sigma_j^{(q')}$ two simplices. We define the $p$-{lower oriented degree} of $\sigma_i^{(q)}$ and $\sigma_j^{(q')}$ as the following sum:
$$\odeg^p_L(\sigma_i^{(q)},\sigma_j^{(q')}):=\frac{1}{2}\sum_{\tau^{(p)}\in \widetilde{S}_p(K)}\sig_L(\sigma_i^{(q)},\sigma_j^{(q')};\tau^{(p)})\,.$$
where by $\widetilde{S}_p(K)$ we denote the set of oriented $p$-simplices of the simplicial complex $K$. Note that we are dividing by $2$ since we have $\sign\big(\sigma^{(q)},\tau^{(p)}\big)=\sign\big(\sigma^{(q)},-\tau^{(p)}\big)$.

\end{enumerate}
\end{definition}

We have the following properties:
\begin{itemize}
\item For $p=q-1$ the first definition recovers the lower similarity notion of \cite{G02}.

\item Assume that $\tau^{(q)}$ is a $q$-simplex and $\sigma^{(p)}$ is a $p$-face of $\tau^{(q)}$, then:
$$\odeg_L^p(\sigma^{(p)},\tau^{(q)})=\sig_L(\sigma^{(p)},\tau^{(q)};\sigma^{(p)})$$
$$\sig_U(\sigma^{(p)},\tau^{(q)};\tau^{(q)})=\sign\big(\tau^{(q)},\sigma^{(p)}\big)=\sig_L(\sigma^{(p)},\tau^{(q)};\sigma^{(p)})\,.$$

\end{itemize}

With the above definitions and properties, the $(q,h)$-boundary operator can be rewritten by the following formula:
\begin{equation}\label{eq:boundary}
\partial_{q,h}(\tau^{(q)})=\displaystyle\sum_{\sigma^{(q-h)}\in S_{q-h}(K)}\odeg_L^{q-h}(\tau^{(q)},\sigma^{(q-h)})\sigma^{(q-h)}\,,
\end{equation}
and it can be shown that there exists a unique homomorphism: $$\partial_{q,h}^*\colon C_{q-h}(K)\to C_q(K)$$ 
defined as:
\begin{equation}\label{eq:adjoint}
\partial_{q,h}^*(\sigma^{(q-h)})=\displaystyle\sum_{\tau^{(q)}\in S_{q}(K)}\odeg_U^{q}(\sigma^{(q-h)},\tau^{(q)})\tau^{(q)},
\end{equation}
and such that $\partial_{_{q,h}}$ and $\partial_{q,h}^*$ are adjoint operators. For $h=1$ they are the usual $q$-boundary and $q$-coboundary operators.

Now we can define the multi combinatorial Laplacian. 

\begin{definition}
Let $q,h,h'$ non negative integers. We define the {$(q,h,h')$-Laplacian operator} $$\Delta_{q,h,h'}\colon C_q(K)\to C_q(K)$$ as the following operator:
$$\Delta_{q,h,h'}:=\partial_{q+h,h}\circ \partial^*_{q+h,h}+\partial^*_{q,h'}\circ \partial_{q,h'}\,.$$

The map $$\Delta^U_{q,h}=\partial_{q+h,h}\circ \partial^*_{q+h,h}\colon C_{q}(K)\to C_q(K)$$ is named the upper $(q,h)$-Laplacian operator and $$\Delta^L_{q,h'}=\partial^*_{q,h'}\circ \partial_{q,h'}\colon C_q(K)\to C_{q}(K)$$ is called th
 $(q,h')$-Laplacian operator. So that $\Delta_{q,h,h'}=\Delta^U_{q,h}+\Delta^L_{q,h'}$.
\end{definition}

\begin{remark}
For $h=h'=1$ we have that $\Delta_{q,1,1}$ is the ordinary $q$-Laplacian operator (see for instance \cite{G02}):
$$\Delta_q:=\partial_{q+1}\circ \partial_{q+1}^*+\partial_q^*\circ \partial_q\,.$$
\end{remark}

Let us fix basis of ordered simplices for $C_{q+h}(K),\, C_{q}(K)$ and $C_{q-h'}(K)$ and denote by $B_{q+h,h}$ and $B_{q,h'}$ the corresponding matrix representation of $\partial_{q+h,h}\colon C_{q+h}(K)\to C_{q}(K)$ and $\partial_{q,h'}\colon C_{q}(K)\to C_{q-h'}(K)$, respectively. Then the associated matrix of the $(q,h,h')$-Laplacian operator is 
$$L_{q,h,h'}=B_{q+h,h}\,B^t_{q+h,h}+B^t_{q,h'}\,B_{q,h'}\,.$$
We shall call it the $(q,h,h')$-Laplacian matrix and, as before, we use the notation  $L_{q,h}^U=B_{q+h,h}\,B^t_{q+h,h}$ and $L_{q,h'}^L=B^t_{q,h'}\,B_{q,h'}$.
\begin{theorem}(\cite{HHS20})\label{thm:Laplacian}
Let $K$ be an oriented simplicial complex and fix oriented basis on the $q$-chains $C_q(K)$ of $K$. With respect to this basis, the $(i,j)$-th entry of the associated matrices of the multi combinatorial Laplacian operators is given by:
\begin{align*}
\big(L^U_{q,h}\big)_{i,j}&=\begin{cases}
\deg_U^{h,q+h}(\sigma_i^{(q)}) & \mbox{ if } i=j \\
\odeg_U^{q+h}(\sigma_i^{(q)},\sigma_j^{(q)}) & \mbox{ if } i\neq j
\end{cases}\\
&\\
\big(L^L_{q,h'}\big)_{i,j}&=\begin{cases}
\deg_L^{h',q-h'}(\sigma_i^{(q)})=\binom{q+1}{q-h'+1} & \mbox{ if } i=j \\
\odeg_L^{q-h'}(\sigma_i^{(q)},\sigma_j^{(q)}) & \mbox{ if } i\neq j
\end{cases}\\
&\\
\big(L_{q,h,h'}\big)_{i,j}&=\begin{cases}
\deg_U^{h,q+h}(\sigma_i^{(q)})+\binom{q+1}{q-h'+1} & \mbox{ if } i=j \\
\odeg_U^{q+h}(\sigma_i^{(q)},\sigma_j^{(q)})+\odeg_L^{q-h'}(\sigma_i^{(q)},\sigma_j^{(q)}) & \mbox{ if } i\neq j
\end{cases}
\end{align*}
\end{theorem}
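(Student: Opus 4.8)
The plan is to reduce everything to the two factorizations $L^U_{q,h}=B_{q+h,h}B^t_{q+h,h}$ and $L^L_{q,h'}=B^t_{q,h'}B_{q,h'}$, reading off the matrix entries of $\partial_{q+h,h}$ and $\partial_{q,h'}$ directly from the oriented-degree expressions \eqref{eq:boundary} and \eqref{eq:adjoint}. Concretely, after fixing the oriented bases, the entry of $B_{q+h,h}$ in the row indexed by $\sigma^{(q)}$ and the column indexed by $\tau^{(q+h)}$ is the coefficient of $\sigma^{(q)}$ in $\partial_{q+h,h}(\tau^{(q+h)})$, which by \eqref{eq:boundary} equals $\odeg_L^{q}(\tau^{(q+h)},\sigma^{(q)})$; since $\sigma^{(q)}$ is then a single lower face, this coefficient is just the sign $\sign(\tau^{(q+h)},\sigma^{(q)})$, nonzero exactly when $\sigma^{(q)}$ is a face of $\tau^{(q+h)}$. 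The same reading applies to $B_{q,h'}$, whose $(\sigma^{(q-h')},\tau^{(q)})$ entry is $\sign(\tau^{(q)},\sigma^{(q-h')})$; the relation $L^U=BB^t$, $L^L=B^tB$ is exactly the adjointness encoded by \eqref{eq:adjoint}.

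For the upper block I would expand $B_{q+h,h}B^t_{q+h,h}$ as a sum over the shared column index, giving
$$\big(L^U_{q,h}\big)_{i,j}=\sum_{\tau^{(q+h)}\in S_{q+h}(K)}\sign(\tau^{(q+h)},\sigma_i^{(q)})\,\sign(\tau^{(q+h)},\sigma_j^{(q)})\,.$$
On the diagonal each summand is a square, hence $1$ precisely when $\sigma_i^{(q)}$ is a face of $\tau^{(q+h)}$ and $0$ otherwise; the sum therefore counts the $(q+h)$-simplices incident to $\sigma_i^{(q)}$, which is $\deg_U^{h,q+h}(\sigma_i^{(q)})$ by the identity $\deg^{h,q+h}_U(\sigma^{(q)})=\#\{(q+h)\text{-simplices incident to }\sigma^{(q)}\}$. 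Off the diagonal the product of signs is exactly the upper sign $\sig_U(\sigma_i^{(q)},\sigma_j^{(q)};\tau^{(q+h)})$: it vanishes unless both simplices are faces of $\tau^{(q+h)}$, and when they are it equals $+1$ or $-1$ according to whether they are similarly or dissimilarly oriented with respect to $\tau^{(q+h)}$, which is the content of the sign definition. Summing over the non-oriented representatives $S_{q+h}(K)$ then reproduces $\odeg_U^{q+h}(\sigma_i^{(q)},\sigma_j^{(q)})$, the factor $\tfrac12$ in that definition being absorbed by passing from the oriented set $\widetilde{S}_{q+h}(K)$ to $S_{q+h}(K)$, since flipping the orientation of $\tau^{(q+h)}$ flips both signs and leaves the product unchanged.

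The lower block is symmetric: expanding $B^t_{q,h'}B_{q,h'}$ over the shared row index gives $\big(L^L_{q,h'}\big)_{i,j}=\sum_{\sigma^{(q-h')}}\sign(\sigma_i^{(q)},\sigma^{(q-h')})\sign(\sigma_j^{(q)},\sigma^{(q-h')})$, whose diagonal counts the $(q-h')$-faces of $\sigma_i^{(q)}$, equal to $\binom{q+1}{q-h'+1}$ by \eqref{e:lowfaces}, and whose off-diagonal entries match $\sig_L$ and hence $\odeg_L^{q-h'}(\sigma_i^{(q)},\sigma_j^{(q)})$ exactly as above. Finally, since $\Delta_{q,h,h'}=\Delta^U_{q,h}+\Delta^L_{q,h'}$ at the level of operators, the stated entries of $L_{q,h,h'}$ follow by adding the two blocks entrywise. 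The main delicate point is the sign bookkeeping: I must verify that the product $\sign(\tau,\sigma_i)\sign(\tau,\sigma_j)$ faithfully reproduces the three-case definition of $\sig_U$ (and likewise $\sig_L$), including the correct handling of the zero case when the two simplices are not jointly incident, and that the factor $\tfrac12$ in the oriented degrees is precisely cancelled by the orientation redundancy of the chosen basis.
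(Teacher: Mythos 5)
Your argument is correct: reading the entries of $B_{q+h,h}$ and $B_{q,h'}$ off equation \eqref{eq:boundary} as incidence signs, expanding the products $B_{q+h,h}B^t_{q+h,h}$ and $B^t_{q,h'}B_{q,h'}$, and checking that the sign products reproduce $\sig_U$, $\sig_L$ (with the factor $\tfrac12$ cancelled by orientation redundancy) is exactly the natural derivation, and your handling of the zero case and of the diagonal counts is sound. The paper itself states this theorem without proof, citing \cite{HHS20}, so there is nothing to compare against here beyond noting that your route is the standard one that the cited source follows.
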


\subsection{How to explicitly compute higher order degrees}\quad

Notice that the multi combinatorial Laplacian does not compute all the higher order degrees of simplices, for instance the multi combinatorial Laplacian computes the lower degree $\deg_L^{h',q-h'}(\sigma^{(q)})$ of a simplex $\sigma^{(q)}$ (which we already knew to be equal to $\binom{q+1}{q-h'+1}$), but does not computes its general $p$-lower degree. Then, let us recall here the explicit description of the higher order degrees in terms of the sign functions and the matrices associated with the generalised boundary and coboundary operators given in \cite{HHS20}. 

We fix some notation. Consider $q,q'$ and $p$ non negative integers, denote $p'=q+q'-p$ and let
$\{\sigma_1^{(q')},\dots,\sigma_m^{(q')}\}$, $\{\sigma_1^{(q)},\dots,\sigma_n^{(q)}\}$, $\{\gamma_1^{(p)},\dots,\gamma_r^{(p)}\}$, $\{\gamma_1^{(p+1)},\dots,\gamma_s^{(p+1)}\}$ and $\{\tau_1^{(p')},\dots,\tau_t^{(p')}\}$ be basis of $C_{q'}(K),\, C_{q}(K)\,, C_{p}(K)$, $C_{p+1}(K)$ and $C_{p'}(K)$ respectively, then we have

\begin{theorem}\label{thm:degrees}
The higher order degrees of a $q$-simplex $\sigma^{(q)}$ can be explicitly computed as follows.
\begin{enumerate}
\item $\deg_L^p(\sigma_j^{(q)})= -1+\sum_{q'=p}^{\dim K}\sum_k\min\big(1,\sum_{i}|(B_{q,p})_{ij}||(B_{q',p})_{ik}|\big)\,.$
\item $\deg_U^p(\sigma_j^{(q)})= -1+\sum_{q'=0}^{p}\sum_k\min\big(1,\sum_{i}|(B_{p,p-q})_{ji}||(B_{p,p-q'})_{ki}|\big)\,.$
\item $\deg^p_A(\sigma_j^{(q)})=\displaystyle\sum_{q'=p}^{\dim K}\displaystyle\sum_{k=1}^{f_{q'}} adj^p(\sigma_j^{(q)},\sigma_k^{(q')})$, being 
$f_{q'}=\dim C_{q'}(K)$ and 
$$adj^p(\sigma^{(q)},\sigma^{(q')})= m_L(q,q';p))\big(1-m_L(q,q';p+1)\big)\big(1-m_U(q,q';p')\big)$$
with
\begin{align*}
m_L(q,q';p)=&\min\Big(1,\displaystyle\sum_{i} |\sig_L\big(\sigma^{(q)},\sigma^{(q')};\gamma_i^{(p)}|\big)\Big)\\ 
m_U(q,q';p)=&\min\Big(1,\displaystyle\sum_{i} |\sig_U\big(\sigma^{(q)},\sigma^{(q')};\gamma_i^{(p)}|\big)\Big)\,. 
\end{align*}

\item$\deg^{p^*}_A(\sigma_j^{(q)})=\deg^p_A(\sigma_j^{(q)})-\displaystyle\sum_{q'=p}^{\dim K}\displaystyle\sum_{k=1}^{f_{q'}}\Delta_{q',k}$, where 
{\small $$
\Delta_{q',k}=\min\Big(1,\sum_{q'',\ell}|\sig_L(\sigma^{(q')}_k,\sigma_\ell^{(q'')};\sigma_k^{(q')})|\cdot adj^p(\sigma_j^{(q)},\sigma_\ell^{(q'')})\Big)adj^p(\sigma_j^{(q)},\sigma_k^{(q')})
$$}for 
$$p\leq q'\leq \dim K,\,\, 1\leq k\leq f_{q'},\, q'+1\leq q''\leq\dim K,\, 1\leq \ell\leq f_{q''}$$ and being $\{\sigma_\ell^{(q'')}\}_{\ell}$ a basis of $C_{q''}(K)$.
\end{enumerate}

\end{theorem}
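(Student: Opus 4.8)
The plan is to turn every formula into a counting identity by first reading each matrix entry $(B_{\cdot,\cdot})_{ij}$ and each sign value $\sig_L,\sig_U$ as an indicator of a face‑inclusion, and then recognising the various $\min(1,\cdot)$ expressions as indicators of the adjacency relations of Definitions \ref{d:qhLoAdj}, \ref{d:hpUdeg} and \ref{d:qhAdj}. The one ingredient I need at the outset is the following dictionary, which follows directly from \eqref{eq:boundary}: for $a>b$ the matrix of the generalised boundary operator $\partial_{a,a-b}\colon C_a(K)\to C_b(K)$ has, in the row indexed by a $b$-simplex $\beta$ and the column indexed by an $a$-simplex $\alpha$, the entry $\odeg_L^{b}(\alpha,\beta)$, which equals $\pm1$ when $\beta\subseteq\alpha$ and $0$ otherwise. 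Hence the absolute value of such an entry is exactly the indicator of $\beta\subseteq\alpha$. In the same way, the case distinctions defining $\sig_L$ and $\sig_U$ show that $|\sig_L(\sigma^{(q)},\sigma^{(q')};\gamma^{(p)})|$ is the indicator that $\gamma^{(p)}$ is a common face of the two simplices, and $|\sig_U(\sigma^{(q)},\sigma^{(q')};\gamma^{(p)})|$ the indicator that $\gamma^{(p)}$ contains both.

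With this dictionary, formula (1) is immediate: the inner sum $\sum_i|(B_{q,p})_{ij}|\,|(B_{q',p})_{ik}|$ counts the $p$-simplices that are simultaneously faces of $\sigma_j^{(q)}$ and of $\sigma_k^{(q')}$, so $\min(1,\cdot)$ is $1$ precisely when $\sigma_j^{(q)}\sim_{L_p}\sigma_k^{(q')}$ and $0$ otherwise. Summing over all $q'\geq p$ and all $k$ therefore counts every simplex $p$-lower adjacent to $\sigma_j^{(q)}$; the summand at $q'=q$, $k=j$ always contributes $1$ (a simplex shares all of its own $p$-faces with itself), so the leading $-1$ removes this tautological term and yields $\deg_L^p$ as in Definition \ref{d:qhLoAdj}. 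Formula (2) is the upper mirror: now $|(B_{p,p-q})_{ji}|$ indicates $\sigma_j^{(q)}\subseteq\tau_i^{(p)}$, the products count the common upper $p$-simplices, and $\min(1,\cdot)$ becomes the indicator of $p$-upper adjacency, again with a $-1$ discarding the self term.

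For (3) the factors $m_L(q,q';p)$, $m_L(q,q';p+1)$ and $m_U(q,q';p')$ are, by the dictionary applied to the sign functions, the indicators of $p$-lower, $(p+1)$-lower and $p'$-upper adjacency respectively. Consequently $m_L(q,q';p)\bigl(1-m_L(q,q';p+1)\bigr)$ is exactly the indicator of strict $p$-lower adjacency $\sim_{L_{p^*}}$, while the remaining factor $1-m_U(q,q';p')$ enforces the absence of $p'$-upper adjacency; thus $adj^p(\sigma^{(q)},\sigma^{(q')})$ is the indicator of the $p$-adjacency of Definition \ref{d:qhAdj}, and the double sum over $q'\geq p$ and $k$ produces $\deg_A^p$.

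Finally (4) refines (3) by subtracting the non-maximal contributions. By the dictionary, $|\sig_L(\sigma_k^{(q')},\sigma_\ell^{(q'')};\sigma_k^{(q')})|$ is the indicator that $\sigma_k^{(q')}$ is a face of the strictly larger simplex $\sigma_\ell^{(q'')}$ (recall $q''>q'$), so the term $\Delta_{q',k}$ is nonzero exactly when $\sigma_k^{(q')}$ is $p$-adjacent to $\sigma_j^{(q)}$ and is at the same time a proper face of some higher-dimensional $\sigma_\ell^{(q'')}$ that is also $p$-adjacent to $\sigma_j^{(q)}$; the outer $\min(1,\cdot)$ then collapses the possibly several such witnesses $\sigma_\ell^{(q'')}$ to a single unit. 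Subtracting $\sum_{q',k}\Delta_{q',k}$ therefore removes each non-maximal $p$-adjacent simplex once and only once, leaving precisely the maximal $p$-adjacency degree of Definition \ref{d:qhAdj}. The routine part throughout is the orientation/face bookkeeping of the first paragraph; the step that demands genuine care — and which I expect to be the main obstacle — is verifying in (4) that the truncation $\min(1,\cdot)$ neither double-counts a non-maximal simplex nested in several maximal ones nor accidentally removes a maximal simplex, which rests on $\sig_L(\sigma_k^{(q')},\sigma_\ell^{(q'')};\sigma_k^{(q')})$ faithfully detecting the proper-face relation $\sigma_k^{(q')}\subsetneq\sigma_\ell^{(q'')}$ among the $p$-adjacent simplices.
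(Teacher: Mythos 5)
Your argument is correct: each formula is indeed verified by reading the absolute values of the boundary-matrix entries and of $\sig_L,\sig_U$ as indicators of face inclusion, so that the $\min(1,\cdot)$ expressions become indicators of the corresponding adjacency relations, with the $-1$ in (1)--(2) removing the self-term (which vanishes automatically in (3), since for $p<q$ a simplex shares its $(p+1)$-faces with itself and so is never \emph{strictly} $p$-lower adjacent to itself) and with $\Delta_{q',k}$ in (4) flagging, exactly once each, those $p$-adjacent simplices that are proper faces of another $p$-adjacent simplex. The paper itself states Theorem \ref{thm:degrees} without proof, deferring to \cite{HHS20}, and your indicator-function bookkeeping is precisely the natural argument behind it.
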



\section{Centrality measures in simplicial complexes}\label{s:cent}\quad
 
This section is aimed to introduce new centrality measures in simplicial complexes. We start by defining centrality measures associated with the given notions of higher order degrees of  Section \ref{s:multi}. Two of them deserves a special mention: the maximal simplicial degree centrality of Definition \ref{d:simpdegcent} (since its relevance in the study of higher-order connectivity in simplicial real-world networks has been revealed in \cite{HHS20}) and the $p$-adjacency simplicial eigenvector centrality Definition \ref{d:psimpeigen} (since is a theoretical candidate to measure not only the quantity, but also the quality of the connections of a simplicial community). We also define walks and distances in simplicial complexes and study connectivity for simplicial complexes, which allow us to generalise for simplicial complexes the well-known closeness and betweenness centralities, needed to study the relevance of a simplicial community in terms of its ability in transmitting information. Finally, we define a simplicial clustering coefficient, different \textcolor{blue}{from} that of \cite{MRV08}, which allows to know the clustering not only around a node, but also around a simplicial community. 

\subsection{Degree centralties}\quad 

The degree centrality of a vertex in an usual graph network measures the popularity of the vertex in terms of the number of links it has, more specifically, it is defined as $C_{\deg}(v):=\frac{\deg (v)}{f_0-1}$, where $f_0$ is the number of vertices of the network (the first entry of the $f$-vector in a simplicial complex $K$). The numerator is just the number of links (pairwise interactions) the vertex has, and the normalization in the denominator consists of considering all possible links one could form with the vertex $v$ and the $f_0-1$ remaining vertices.  

Following this idea we give a first definition of a higher degree centrality for a vertex in a simplicial complex $K$, allowing us to keep track of the existing multi-interactions in a simplicial network. Notice that for vertices there is only upper adjacency. 

\begin{definition}\label{d:pdegcent} The $h$-upper degree centrality of a $0$-simplex $v$ in a simplicial complex $K$ is defined as the ratio:
$$C_{\deg^{h,h}_U}(v):=\frac{\deg_U^{h,h}(v)}{\binom{f_0-1}{h}}\,.$$
\end{definition}
The numerator is the number of $h$-simplices in $K$ to which $v$ belongs to, and the denominator is the number of possible $h$-simplices which could be formed with $v$ and the remaining $f_0-1$ vertices. That is, $C_{\deg^{h,h}_U}(v)$ is measuring a normalised popularity of the agent $v$ in terms of the communities of at least $h$ agents it belongs to. Notice that for $h=1$ it agrees with the usual degree centrality.

Let us point out that some of these $h$-communities might be nested in other higher dimensional ones, so that, if one wants to measure its popularity in terms of the communities of exactly $h$ agents not nested in higher dimensional ones, one should consider the strict upper degree of Definition \ref{d:hpUdeg}. Thus, we give a second definition.

\begin{definition}\label{d:p*degcent} The strict $h$-upper degree centrality of a $0$-simplex $v$ in a simplicial complex $K$ is defined to be the ratio:
$$C_{\deg^{h,h^*}_U}(v):=\frac{\deg_U^{h,h^*}(v)}{\binom{f_0-1}{h}}\,.$$
\end{definition}

Similarly we can define the $(h,q+h)$-upper degree centralities for $q$-simplices in $K$.

\begin{definition} The $(h,q+h)$-upper degree centrality of a $q$-simplex $\sigma^{(q)}$ in a simplicial complex $K$ is defined as the ratio:
$$C_{\deg^{h,q+h}_U}(\sigma^{(q)}):=\frac{\deg_U^{h,q+h}(\sigma^{(q)})}{\binom{f_0-(q+1)}{q+h}}\,,$$
where the numerator is the number of $(q+h)$-simplices which are $(q+h)$-upper adjacent to $\sigma^{(q)}$ and the denominator means all possible $(q+h)$-simplices one can construct using the $q+1$ vertices of $\sigma^{(q)}$ and the remaining $f_0-(q+1)$ ones. 

The strict $(h,q+h)$-upper degree centrality of a $q$-simplex $\sigma^{(q)}$ in a simplicial complex $K$ is defined as the ratio:
$$C_{\deg^{h,(q+h)^*}_U}(\sigma^{(q)}):=\frac{\deg_U^{h,(q+h)^*}(\sigma^{(q)})}{\binom{f_0-(q+1)}{q+h}}\,.$$
\end{definition}

Using equation (\ref{e:strictdegform}) we have:
$$C_{\deg^{h,(q+h)^*}_U}(\sigma^{(q)})=\frac{\sum_{i=0}^{\dim K-(q+h)} (-1)^i\cdot \binom{h+i}{h}\cdot \binom{f_0-(q+1)}{q+h+i}\cdot C_{\deg^{h+i,q+h+i}_U}(\sigma^{(q)})}{\binom{f_0-(q+1)}{q+h}}$$

Now, let us point out that for $q$-simplices we are measuring (with the above centralities) the collaborative communities of $q+h$ agents which $\sigma^{(q)}$ belongs to, but it could happen, for $q>0$, that some smaller sub-communities of $\sigma^{(q)}$ (some of its faces), might belong to other distinct collaborative simplicial communities, and such an inter-community collaboration can not be counted with the above definitions. This is due to the fact that, as opposite to the vertex case (where there is not lower adjacency), for $q>0$ there exists $p$-lower adjacency and, what is more important for this particular case, there exists a notion of  $p$-adjacency: different collaborative communities sharing a lower sub-community of strictly $p$ agents ($p$-face) and not being part of a common upper higher dimensional community (see Definition \ref{d:qhAdj}). Bearing in mind this situation, let use the $p$-adjacency degree of a $q$-simplex to define another centrality measure.

\begin{definition}\label{d:padjcentr}
The $p$-adjacency degree centrality of a $q$-simplex $\sigma^{(q)}$ in a simplicial complex $K$ is defined to be the ratio:
$$C_{\deg^{p}_A}(\sigma^{(q)}):=\frac{\deg_A^{p}(\sigma^{(q)})}{\binom{q+1}{p+1}\big(\sum_{q'=p+1}^{\dim K}\binom{f_0-(p+1)}{q'}-1\big)}\,.$$
The maximal $p$-adjacency degree centrality of a $q$-simplex $\sigma^{(q)}$ in a simplicial complex $K$ is defined as the ratio:
$$C_{\deg^{p*}_A}(\sigma^{(q)}):=\frac{\deg_A^{p*}(\sigma^{(q)})}{\binom{q+1}{p+1}\big(\sum_{q'=p+1}^{\dim K}\binom{f_0-(p+1)}{q'}-1\big)}\,.$$
\end{definition}
The normalisation in the denominator of the first formula means the following: for each $p$-face of $\sigma^{(q)}$ (there are $\binom{q+1}{p+1}$ ones) we consider all possible $q'$-simplices one could form with the $(p+1)$ vertices of the $p$-face joint with the $f_0-(p+1)$ remaining ones and, since we might be over counting $\sigma^{(q)}$ for each of its $p$-faces, we then remove one.

Notice that maximal $p$-adjacency degree centrality is measuring the popularity of a simplex in terms of the (normalised) number of communities in the network which are collaborating with the sub-communities (faces) of the given simplex, but it is not taking into account the maximal communities on which the simplex could be nested in. Therefore, we can try to use both the strictly $(h,q+h)$-upper degree centrality and the maximal $p$-adjacency degree centrality to define a new two parameter degree centrality by:

$$C_{\deg^{(p_1^*,p_2^*)}}(\sigma^{(q)}):=C_{\deg^{p_1^*}_U}(\sigma^{(q)})+C_{\deg^{p_2^*}_A}(\sigma^{(q)})\,,$$
but then, we will find ourselves with a measure which might be bigger than one. Thus, there are two possible solutions, both of them using the maximal simplicial degree of Definition \ref{d:simpdeg}. The first one is to define a simplicial centrality degree by the formula:
 $$\frac{\deg^*(\sigma^{(q)})}{\sum_{h=1}^{\dim K-q}\binom{f_0-(q+1)}{q+h}+\sum_{p=0}^{q-1}\binom{q+1}{p+1}\big(\sum_{q'=p+1}^{\dim K}\binom{f_0-(p+1)}{q'}-1\big)}\,,$$
 where $\deg^*(\sigma^{(q)})=\deg^*_A(\sigma^{(q)})+\deg^*_U(\sigma^{(q)})$ is the maximal simplicial degree of Definition \ref{d:simpdeg}. But, since in a simplicial complex $K$ there cannot be  more collaborations for a $q$-simplex (and its faces) than the total number of simplices minus $1$, we propose a simplification of this formula in the following definition. 

\begin{definition}\label{d:simpdegcent}
Let $\sigma^{(q)}$ be a $q$-simplex in $K$. We define the maximal simplicial centrality degree centrality of $\sigma^{(q)}$ as:
$$C_{\deg^{*}}(\sigma^{(q)}):=\frac{\deg^*(\sigma^{(q)})}{\sum_{i=0}^{\dim K}f_{i}-1}\,,$$
where $\deg^*(\sigma^{(q)})=\deg^*_A(\sigma^{(q)})+\deg^*_U(\sigma^{(q)})$ is the maximal simplicial degree of Definition \ref{d:simpdeg} and $f_{i}=\dim C_{i}(K)$.
\end{definition}

%

Since the maximal simplicial degree of a $q$-simplex counts the number of distinct facets which the $p$-faces of the $q$-simplex belongs to (for $p<q$) and also counts the distinct facets (different from the above ones) which the $q$-simplex belong to, then the maximal simplicial centrality degree of a $q$-simplex measures a normalised relevance of the $q$-simplex, keeping track of both the number of all collaborative simplicial communities with its faces (using the maximal $p$-adjacency degree) and the number of all simplicial communities to which the $q$-simplex belongs to (using the strictly $(q+h)$-upper degree).  

This is what we propose as a normalised quantity of the relevance of a community in terms not of the sum of the relevance of its individuals, but of the sum of the (not repeated or overcounted) relevance of its $p$-subcommunites for all $p=0,\dots ,q$.

\begin{figure}[!h]
\centering
\includegraphics[scale=1.1]{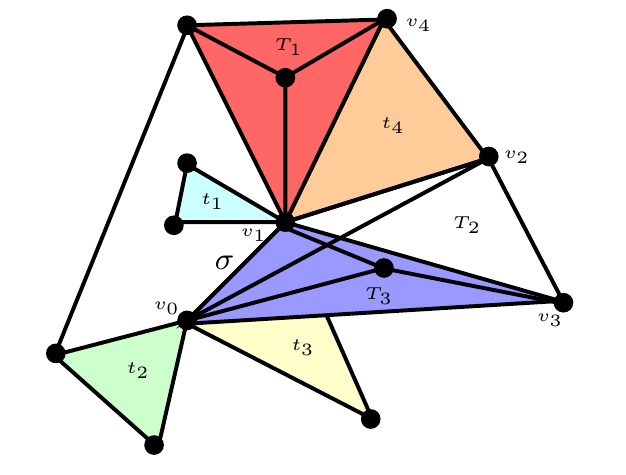}
\caption{General simplicial degree of $1$-simplex.}
\label{fig:AdjDeg}
\end{figure}

The practical usefulness of the maximal simplicial degree centrality  measure becomes apparent in \cite{HHS20}, where we have performed an structural analysis of higher-order connectivity of many real-world datasets by studying statistical properties and the maximal simplicial degree distribution, showing a rich and varied higher-order connectivity structures which, as one would expect, reflect similar higher-order patterns for datasets of the same type. Real-world datasets used there include coauthor networks, cosponsoring congress bills, contacts in schools, drug abuse warning networks, e-mail networks or publications and users in online forums.

\begin{example}\label{ex:AdjDeg}
Let us compute the maximal simplicial centrality degree centrality of the $1$-simplex $\sigma$ defined by the vertices $v_0$ and $v_1$ in Figure \ref{fig:AdjDeg}. We have $q=1$ and $\dim K=3$, so that:
\begin{align*}
\deg^*(\sigma)&=\deg^*_A(\sigma)+\deg^*_U(\sigma)=\\
&=\deg^{0^*}_A(\sigma)+\deg_U^{(1,2^*)}(\sigma)+\deg_U^{(2,3^*)}(\sigma)=\\
&=4+1+2=7\,,
\end{align*}
where:
\begin{itemize}
\item $\deg^{0^*}_A(\sigma)$ is the number of $q'$-simplices strictly $0$-lower adjacent to $\sigma$ and not upper adjacent. These are the triangle $t_1$ (in blue), $t_2$ (in green), $t_4$ (in orange) and the tetrahedron $T_1$ (in red), that is, it contributes in $4$. 
\item $\deg^{(1,2^*)}_U(\sigma)$ is the number of $2$-simplices (triangles) strictly $2$-upper adjacent to $\sigma$. This is only the triangle $t_3$ (in yellow).  
\item $\deg^{(2,3^*)}_U(\sigma)$ is the number of $3$-simplices (tetrahedra) strictly $3$-upper adjacent to $\sigma$. These are the tetrahedron $T_2$ (in white, formed by the vertices $\{v_0,v_1,v_2,v_3\}$) and the tetrahedron $T_3$ (in purple).   
\end{itemize}
Since $f_i$ denotes the number of $i$-simplices of the simplicial complex, we have that $f_0=13$, $f_1=24$, $f_2=15$ and $f_3=3$ and thus $\sum_{i=0}^{\dim K}f_{i}=55$. Therefore, the maximal simplicial centrality degree centrality of $\sigma$ is:
$$C_{\deg^{*}}(\sigma):=\frac{\deg^*(\sigma)}{\sum_{i=0}^{\dim K}f_{i}-1}=\frac{7}{54}\,.$$

\end{example}

Let us end this subsection by defining the normalised average degree of a simplicial network. Since there are several types of adjacency notions for simplices, we shall define a maximal version of this measure by using both the strict upper degree and the maximal adjacency degree, others definitions can be given in an analogous way.

\begin{definition}\quad 
\begin{itemize}
\item We define the strictly upper normalised average degree of $C_q(K)$ by:
$$\bar E_U^{*}[C_q(K)]:=\frac{\sum_{i=1}^{f_q}\deg_U^*(\sigma_i^{(q)})}{M_{q}}\,,$$
where we are denoting $$M_{q}=\sum_{h=1}^{\dim K-q}f_q\cdot \binom{f_0-(q+1)}{q+h}$$ and $$\deg_U^*(\sigma_i^{(q)})=\sum_{h=1}^{\dim K-q}\deg_U^{h,(q+h)^*}(\sigma_i^{(q)})\,.$$

\item We define the strictly upper normalised average degree of a simplicial complex $K$ by:
$$\bar E_U^{*}[K]:=\frac{\sum_{q=0}^{\dim K}\sum_{i=1}^{f_q}\deg_U^*(\sigma_i^{(q)})}{\sum_{q=0}^{\dim K}M_{q}}\,.$$
\end{itemize}
\end{definition}

\begin{remark}
 The number $M_{q}$ in the denominator reflects the fact that for each chosen $q$-simplex in $C_q(K)$ (there are $f_q$ $q$-simplices in $K$), there are $\binom{f_0-(q+1)}{q+h}$ possible $q+h$-simplices which can be formed with the $q+1$ vertices of the $q$-simplex joint with the $f_0-(q+1)$ remaining ones.
\end{remark}

Using the $p$-adjacency degree for $q$-simplices (with $q>0$), we define the following.
\begin{definition}\quad
\begin{itemize}
\item We define the maximal adjacency normalised average degree of $C_q(K)$ by:
$$\bar E_A^{*}[C_q(K)]:=\frac{\sum_{i=1}^{f_q}\deg_A^*(\sigma_i^{(q)})}{N_{q}}\,,$$
where $$N_{q}=\sum_{p=0}^{q-1}f_q\cdot \binom{q+1}{p+1}\cdot \big(\sum_{q'=p+1}^{\dim K} \binom{f_0-(p+1)}{q'}-1\big)$$ and $$\deg_A^*(\sigma_i^{(q)})=\sum_{p=0}^{q-1}\deg_A^{p^*}(\sigma_i^{(q)})\,.$$ 

\item We define the maximal adjacency normalised average degree of $K$ by:
$$\bar E_A^{*}[K]:=\frac{\sum_{q=0}^{\dim K}\sum_{i=1}^{f_q}\deg_A^*(\sigma_i^{(q)})}{\sum_{q=0}^{\dim K}N_{q}}\,.$$
\end{itemize}
\end{definition}
\begin{remark}
$N_{q}$ reflects the fact that each $q$-simplex (there are $f_q$ $q$-simplices in $K$), has $\binom{q+1}{p+1}$ $p$-faces, and for each of these faces there are $\binom{f_0-(p+1)}{q'}$ possible $q'$-simplices which can be formed with the $p+1$ vertices of the $p$-face joint with the $f_0-(p+1)$ remaining ones.
\end{remark}

Using these measures, we define next the maximal normalised average degree of a simplicial network. 

\begin{definition}\quad 
\begin{itemize}
\item We define the maximal normalised average degree of $C_q(K)$ by:
$$\bar E^{*}[C_q(K)]:=\frac{\sum_{i=1}^{f_q}\deg^*(\sigma_i^{(q)})}{M_q+N_{q}}\,,$$
where  $\deg^*(\sigma_i^{(q)})=\deg_A^{p^*}(\sigma_i^{(q)})+\deg_A^{p^*}(\sigma_i^{(q)})$ is the maximal simplicial degree of Definition \ref{d:simpdeg}. 

\item We define the maximal normalised average degree of $K$ by:
$$\bar E^{*}[K]:=\frac{\sum_{q=0}^{\dim K}\sum_{i=1}^{f_q}\deg^*(\sigma_i^{(q)})}{\sum_{q=0}^{\dim K}(M_q+N_{q})}\,.$$

\item Vertex case. For $q=0$ we define:
$$\bar E^{*}[C_0(K)]:=\bar E_U^{*}[C_0(K)]=\frac{\sum_{i=1}^{f_0}\deg_U^*(v_i)}{M_{0}}\,,$$
where $$M_{0}=\sum_{h=1}^{\dim K}f_0\cdot \binom{f_0-1}{h}$$ and $$\deg_U^*(v_i)=\sum_{h=1}^{\dim K}\deg_U^{h,h^*}(v_i)\,.$$ 
\end{itemize}
\end{definition}

For $q=0$ and $h=1$, the maximal normalised average degree $\bar E^*[K]$ coincides with the normalised average degree of the graph network.

\subsection{Eigenvector Centralities}\quad

This centrality measure is a natural extension of the degree centrality and was first introduced by Bonacich (\cite{Bon72a,Bon72b,Bon07}). The idea behind the  eigenvector centrality is that a node will have a high eigenvector value if the node has high degree and is connected to other nodes that also have high degree.   
In contrast to degree centrality, eigenvector centrality takes into account both the number and the quality of connections to a given node, determining the influence of a node based on that of its neighbours. Thus it considers the entire pattern of the network, which makes this measure suitable for detecting groups of important nodes. In other words, eigenvector centrality is sensitive not only to how well an individual is connected to its neighbours but also to how well these neighbours are connected.  As a result, in a graph a node with few edges to high eigenvector centrality nodes may have a higher eigenvector centrality than a node with more edges to low eigenvector centrality nodes.

For nodes in a graph the eigenvector centrality is obtained from an eigenvector decomposition of the adjacency matrix. Given a graph with vertices $\{v_1,\ldots, v_n\}$ let us  denote by $A=(a_{ij})$ its  adjacency matrix, that is, $a_{ij}=1$ if vertices $v_i$ and $v_j$ are connected by an edge and $0$ otherwise. The eigenvector centrality gives each vertex $v_i$ a score $x_i$ proportional to the sum of the scores of its neighbours: 

\begin{equation}\label{eq:evc}
x_i=\frac{1}{\lambda}\displaystyle\sum_{j=1}^{n}a_{ij}x_j
\end{equation}

In order to determine the desirable value for $\lambda$, note that equation (\ref{eq:evc}) can be rewritten in matrix form as $(A-\lambda I)x=0$, with $x=(x_1,\ldots,x_n)$ and being $I$ the $n$-identity matrix. Since $A$ is symmetric  all its eigenvalues are real, its eigenvectors are orthogonal with respecto to the standard euclidean product and it is diagonalizable. Furthermore, the Perron-Frobenius theorem guarantees that  $A$ has a unique largest real eigenvalue which corresponds to an eigenvector with strictly positive components. So that, uniqueness of this definition is ensured by taking $\lambda$ as such largest eigenvalue of $A$ and defining the vector of eigenvector centralities as the (suitably normalized) eigenvector of $A$ corresponding to its largest eigenvalue.

When dealing instead with phenomena whose process dynamic depends not only on interacting individuals but also on interactions between communities of different sizes, one should consider systems represented by simplicial complexes. In this way, the eigenvector centrality measure was extending to simplicial networks in \cite{ER18}, in terms of the adjacency matrix $A(q)$ associated to the adjacency among $q$-simplices and its corresponding largest eigenvalue. Recall that this matrix is defined by:
\begin{equation}\label{eq:adjmatrix}
A(q)_{ij}=
\begin{cases}
1 & \text{ if } \sigma_i^{(q)}\sim_L\sigma_j^{(q)} \text{ and } \sigma_i^{(q)}\not\sim_U\sigma_j^{(q)}\\
0 & \text{ if } i=j \text{ or }  \sigma_i^{(q)}\not\sim_L\sigma_j^{(q)} \text{ or } \sigma_i^{(q)}\sim_U\sigma_j^{(q)}
\end{cases}
\end{equation}
where  $\sigma_i^{(q)}\sim_L\sigma_j^{(q)}$ if they share a common $(q-1)$-face and $\sigma_i^{(q)}\sim_U\sigma_j^{(q)}$ if  they are both faces of the same common $(q+1)$-simplex.\\
Then, the simplicial eigenvector centrality of the $i$th $q$-simplex in a simplicial complex is given by the $i$th component of the principal eigenvector of the adjacency matrix $A(q)$.

Note that the adjacency matrix $A(q)$ given in Equation \ref{eq:adjmatrix} is $L^a-D$, where $D$ is a diagonal matrix whose entries are the degree of the $q$-simplices and $L^a$ is the resulting matrix considering the $(q,1)$-Laplacian matrix in absolute values. That is, and according to Sections \ref{sec:higheradj} and \ref{s:qhLap}:  
$$
D_{ii}=\deg_U^{1,q+1}(\sigma_i^{(q)})+\deg_L^{1,q-1}(\sigma_i^{(q)})$$
and
$$(L^a)_{ij}=|(L_{q,1,1})_{ij}|\,.$$

It would be tempting to think this still works when one tries to analyze the relevance of simplices which share faces of dimension smaller than $p=q-1$ (that is, $p$-lower adjacent simplices, in terms of Definitions \ref{d:qhLoAdj}). However, as we have pointed out in Remark \ref{rem:Lp}, in contrast to the ordinary lower adjacency, two $p$-lower adjacent simplices could have more than one common lower $p$-face. As a consequence there might be zeros in the multi combinatorial Laplacian corresponding to $p$-lower adjacent simplices but so that the orientations of their common $p$-faces are opposite pairwise and thus it cancels the corresponding oriented degree. On the other hand, since two $p$-lower adjacent simplices with more that one common $p$-face are also $(p+1)$-lower adjacent, one has that the uniqueness of the common lower simplex is ensured for strictly $p$-lower adjacent simplices. Hence, taking into account the notion of $p$-adjacency given in Definition \ref{d:qhAdj} we will be able to extend to higher order the above simplicial centrality measure for $q$-simplices. It is made as follows.

Firstly, by using the notation of the Theorem \ref{thm:degrees}, we know that two $q$-simplices $\sigma_i^{(q)}$ and $\sigma_j^{(q)}$ are $p$-adjacent if and only if $adj^p(\sigma_i^{(q)},\sigma_j^{(q)})=1$. Even more, we have
\begin{equation}
adj^p(\sigma_i^{(q)},\sigma_j^{(q)})=
\begin{cases}
1 & \text{ if } \sigma_i^{(q)}\sim_{A_p}\sigma_j^{(q)}\\
0 & \text{ if } i=j \text{ or }  \sigma_i^{(q)}\not\sim_{A_p}\sigma_j^{(q)}
\end{cases}
\end{equation}
Then the $p$-adjacency matrix for the set $\{\sigma^{(q)}_1\ldots,\sigma^{(q)}_n\}$ of $q$-simplices of a simplicial complex is defined to be the matrix $A(q,p)$ whose $ij$th entry is
$$A(q,p)_{ij}=adj^p(\sigma_i^{(q)},\sigma_j^{(q)})\,.$$
Finally  we define the $p$-adjacency simplicial eigenvector centrality as:

\begin{definition}\label{d:psimpeigen}
Let $\sigma_i^{(q)}$ be a $q$-simplex in a simplicial complex. The $p$-adjacency simplicial eigenvector centrality of $\sigma_i^{(q)}$ is the $i$th component of the normalized eigenvector of the $p$-adjacency matrix $A(q,p)$ corresponding to its largest eigenvalue, where by normalized eigenvector we mean that its components sum to 1.
\end{definition}

Notice that for $p=q-1$ this definition agrees with the one given in \cite{ER18}, which in turn generalizes the eigenvector centrality measure introduced by Bonacich (for $q=0$ the adjacency shall be given by the upper adjacency).

\subsection{Centrality measures associated with walks and distances}\quad 

Given a simplicial complex $K$, the notion of $p$-nearness is given by saying (see \cite{At72,MR12}) that two simplices $\sigma$ and $\sigma'$ are $p$-near if they share a $p$-face. Two simplices $\sigma$ and $\sigma'$ are $p$-connected if there exists a sequence of simplices $\sigma, \sigma(1), \sigma(2), \dots ,\sigma(r),\sigma'$ such that any two consecutive ones share at least a $p$-face (notice that if $\sigma$ and $\sigma'$ are $p$-near then they are also $(p-1)$, $(p-2)$, ... and $0$-near). This implies the use of the $p$-lower adjacency of Definition \ref{d:qhLoAdj} for each consecutive pair of simplices. If we denote $K_p$ the set of simplices in $K$ of dimension greater or equal than $p$, then $p$-connectedness is an equivalence relation on $K_p$ and the quotient space $\bar K_p$ represents the set of equivalence classes of $p$-connected simplices, called the $p$-connected components of $K$.

On the other hand, in \cite{ER18} the authors define a $s^{q}$-walk (for $q>0$) as an alternating sequence of $q$-simplices and $(q-1)$-simplices: $$\sigma_1^{(q)},\tau_1^{(q-1)}\sigma_2^{(q)},\tau_2^{(q-1)},\dots ,\tau_r^{(q-1)}\sigma_{r+1}^{(q)}$$ 
such that, for all $i=1,\dots ,r$, $\tau_i^{(q-1)}$ is a face of both $\sigma_i^{(q)}$ and $\sigma_{i+1}^{(q)}$ and $\sigma_i^{(q)}$ and $\sigma_{i+1}^{(q)}$ are not both faces of the same $(q+1)$-simplex. This definition implies the use of the $(q-1)$-adjacency of Definition \ref{d:qhAdj} for each consecutive pair of $q$-simplices. Following this idea, we give an alternative definition of $p$-nearness which generalises both definitions using the maximal $p$-adjacency of Definition \ref{d:qhAdj}.

\begin{definition}\label{d:pnear}\quad 
\begin{itemize}
\item Two simplices $\sigma^{(q)}$ and $\sigma^{(q')}$ are said to be maximal $p$-near if they are maximal $p$-adjacent, that is,  if $\sigma^{(q)}\sim_{A_{p^*}}\sigma^{(q')}$.

\item A $(p_1,p_2,\dots ,p_r)$-walk between the simplices $\sigma_1^{(q_1)}$ and $\sigma_{r+1}^{(q_{r+1})}$ is a sequence of simplices:
$$\sigma_1^{(q_1)}\tau_1^{(p_1)}\sigma_2^{(q_2)}\tau_2^{(p_2)}\cdots \tau_r^{(p_r)}\sigma_{r+1}^{(q_{r+1})}$$
such that $\sigma_i^{(q_i)}$ and $\sigma_{i+1}^{(q_{i+1})}$ are maximal $p_i$-near, for each $i\in \{1,\dots, r\}$.

\item A $p$-walk between the simplices $\sigma_1^{(q_1)}$ and $\sigma_{r+1}^{(q_{r+1})}$ is a sequence as above:
$$\sigma_1^{(q_1)}\tau_1^{(p_1)}\sigma_2^{(q_2)}\tau_2^{(p_2)}\cdots \tau_r^{(p_r)}\sigma_{r+1}^{(q_{r+1})}$$
where $p=\min \{p_1,\dots ,p_r\}$.

\item We call $r$ the lenght of the $p$-walk and we say that two simplices are maximal $p$-connected if there exists a $p$-walk between them.
\end{itemize}
\end{definition}

\begin{remark}
Notice that if two simplices are maximal $p$-near, they are not maximal $(p-1)$-near. They are clearly $p$-near and thus also $(p-1)$-near, $(p-2)$-near, and so on. 
\end{remark}

\begin{remark}
If $q_i=q$ for all $i$ and $p=q-1$, then the above notion of $p$-walk recovers the definition of a $s^q$-walk given in \cite[Def. 9]{ER18}.
\end{remark}

\begin{figure}[!h]
\centering
\includegraphics[scale=0.9]{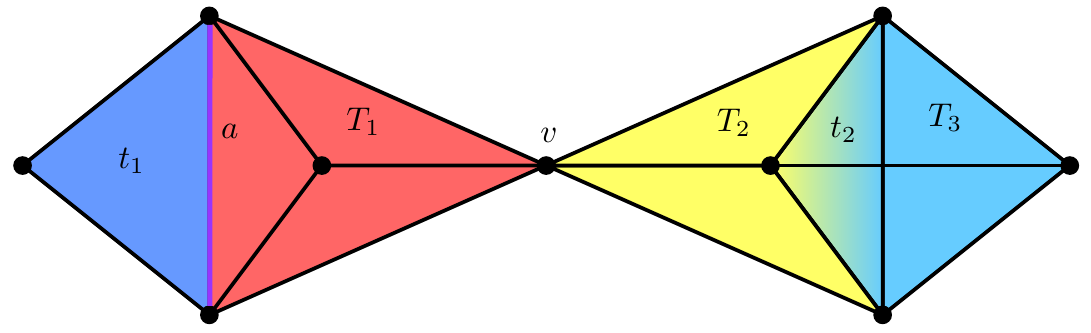}
\caption{Maximal $p$-nearness and walks.}
\label{fig:near}
\end{figure}

\begin{example}
In Figure \ref{fig:near} we have that:
\begin{itemize}
\item The triangle $t_1$ (in blue) and the tetrahedron $T_1$ (in red) are maximal $1$-near (sharing the purple face given by the edge $a$).
\item $T_1$ and the tetrahedron $T_2$ (in yellow) are maximal $0$-near (sharing the vertex $v$).
\item $T_2$ and the tetrahedron $T_3$ (in light blue) are maximal $2$-near (sharing the face given by the triangle $t_2$).
\item There exits a $0$-walk between $t_1$ and $T_3$.
\end{itemize}
\end{example}

Let us point out that with this definition, maximal $p$-connectedness is not an equivalence relation, since a simplex cannot be maximal $p$-adjacent to itself, and thus is not reflexive. Then, we define the following equivalence relation:
$$(\sigma^{(q)},\sigma^{(q')})\in R_p\,\iff \, \begin{cases}\sigma^{(q)} \, \mbox{ and }\, \sigma^{(q')}\, \mbox{ are maximal }p\mbox{-connected }\\ \mbox{ or }\sigma^{(q)}=\sigma^{(q')}
\end{cases}$$

Thus, the quotient space $\bar K_p^*=K_p/R_p$ represents the set of equivalence classes of maximal $p$-connected simplices. We shall refer to these equivalence classes as the maximal $p$-connected components of $K$. 

\begin{remark}
Notice that if $\sigma^{(q')}\sim_{A_{p^*}}\sigma^{(q)}$, then $\sigma^{(q')}\sim_{L_{p^*}}\sigma^{(q)}$, but the converse is no longer true in general. Thus, if $\sigma^{(q')}$ lies in a maximal $p$-connected component, then it also lies in a $p$-connected component, but again, the opposite direction is no longer true in general. 
\end{remark}

The number of elements, $Q_p^*$, of the quotient set $\bar K_p^*$ (that is, $Q_p^*$ is the number of maximal $p$-connected components of $K$) gives a generalisation of the topological invariant $Q$-vector: $(Q_{\dim K},\dots ,Q_1,Q_0)$ (also referred as the first structure vector in the literature). We shall refer to it as the $Q^*$-vector: $(Q^*_{\dim K},\dots ,Q^*_1,Q^*_0)$. 

Once we have walks in a simplicial complex, we can define a new distance. 

\begin{definition}
The $p$-distance between two simplices $\sigma^{(q)}$ and $\sigma^{(q')}$, written $d_p(\sigma^{(q)},\sigma^{(q')})$, is defined to be the smallest length among the $p$-walks between $\sigma^{(q)}$ and $\sigma^{(q')}$. If there are no $p$-walks we put $d_p(\sigma^{(q)},\sigma^{(q')})=\infty$. If $q=q'=0$ we define the distance between vertices as the usual one.
\end{definition}
 
\begin{proposition}
$d_p$ is a generalised\footnote{The term generalised distance will be used to mean a distance function $d\colon X\times X\to [0,\infty]$ on a set $X$ satisfying all the usual metric space axioms except that it need not be finite. Any such functions defines a topology on $X$ in the usual way and induces a metric space structure on each connected component of $X$.} distance on $K$.
\end{proposition}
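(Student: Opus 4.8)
The plan is to verify, one axiom at a time, that $d_p$ has the defining properties of a generalised distance: it takes values in $[0,\infty]$, vanishes exactly on the diagonal, is symmetric, and satisfies the triangle inequality. The vertex case $q=q'=0$ is defined to be the ordinary graph distance, which is already a metric, so throughout I focus on the general simplicial case. For the first property, every $p$-walk has length a non-negative integer (and we set $d_p=\infty$ when no $p$-walk exists), so $d_p$ lands in $[0,\infty]$ at once. Adopting the convention that the trivial walk of length $0$ joins a simplex to itself, consistent with the diagonal being adjoined by hand in the relation $R_p$, gives $d_p(\sigma^{(q)},\sigma^{(q)})=0$. Conversely, if $\sigma^{(q)}\neq\sigma^{(q')}$ then any $p$-walk between them has at least one step, because, as remarked before the definition of $R_p$, a simplex is never maximal $p$-adjacent to itself; hence its length is $\geq 1$ and $d_p(\sigma^{(q)},\sigma^{(q')})\geq 1>0$. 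Thus $d_p=0$ if and only if the two simplices coincide.

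For the triangle inequality I would argue by concatenation. Given a shortest $p$-walk $W_1$ from $\sigma^{(q)}$ to $\sigma^{(q')}$ and a shortest $p$-walk $W_2$ from $\sigma^{(q')}$ to $\sigma^{(q'')}$, juxtaposing them at the common simplex $\sigma^{(q')}$ produces a sequence $W$ from $\sigma^{(q)}$ to $\sigma^{(q'')}$ whose consecutive simplices are still maximal $p_i$-near. Since $W_1$ and $W_2$ are $p$-walks, the minimum of the intermediate parameters equals $p$ in each, so the minimum over all of $W$ is again $p$; hence $W$ is a genuine $p$-walk, of length equal to the sum of the lengths of $W_1$ and $W_2$. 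Taking the minimum over all $p$-walks yields $d_p(\sigma^{(q)},\sigma^{(q'')})\leq d_p(\sigma^{(q)},\sigma^{(q')})+d_p(\sigma^{(q')},\sigma^{(q'')})$; the cases in which either term is $\infty$, or in which an endpoint collapses to the trivial walk, are immediate.

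Symmetry is where I expect the real work to lie. The natural idea is to reverse a $p$-walk $\sigma_1^{(q_1)}\tau_1^{(p_1)}\cdots\tau_r^{(p_r)}\sigma_{r+1}^{(q_{r+1})}$ into $\sigma_{r+1}^{(q_{r+1})}\tau_r^{(p_r)}\cdots\tau_1^{(p_1)}\sigma_1^{(q_1)}$, which keeps each parameter $p_i$, hence the value $p=\min\{p_1,\dots,p_r\}$ and the length, unchanged; this would give $d_p(\sigma^{(q')},\sigma^{(q)})\leq d_p(\sigma^{(q)},\sigma^{(q')})$ and, running the argument backwards, equality. The delicate point is that reversal requires the maximal $p$-nearness relation to be symmetric, whereas the underlying notion $\sigma\sim_{A_{p^*}}\sigma'$ of Definition \ref{d:qhAdj} is stated directionally: its maximality clause is tested only against one of the two simplices, and one can exhibit configurations, for instance a facet $\sigma$ adjacent to a non-maximal face $\sigma'$ that sits inside another simplex adjacent to $\sigma$, in which the maximality survives from one side but not the other. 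I would therefore resolve this by reading \emph{maximal $p$-near} as the symmetric relation on unordered pairs intended by Definition \ref{d:pnear}, equivalently by imposing the maximality condition from both endpoints, after which each reversed step is again a maximal $p_i$-near step and symmetry follows. Establishing, or building into the definition, this symmetry of maximal $p$-nearness is the main obstacle; once it is in place, the remaining axioms are routine.
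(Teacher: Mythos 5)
Your proof follows the same route as the paper's: non-negativity and the identity axiom are immediate from the integer length of walks, symmetry is obtained by reversing a shortest $p$-walk, and the triangle inequality by concatenating walks at the common endpoint. The one place where you go beyond the paper is precisely the point you flag: the paper's proof disposes of symmetry in one line (``any $p$-walk between $\sigma^{(q)}$ and $\sigma^{(q')}$ gives a $p$-walk between $\sigma^{(q')}$ and $\sigma^{(q)}$'') without noting that the relation $\sigma^{(q')}\sim_{A_{p^*}}\sigma^{(q)}$ of Definition \ref{d:qhAdj} is stated directionally --- the maximality clause tests containment of $\sigma^{(q')}$ in simplices $p$-adjacent to $\sigma^{(q)}$ only --- so reversibility of each step is not literally automatic from the definitions; your example of a non-facet simplex whose maximality fails from one side is exactly the kind of configuration that shows this. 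Your resolution (reading maximal $p$-nearness as a relation on unordered pairs, as Definition \ref{d:pnear} evidently intends) is the right fix and makes the reversal argument, and hence the whole proof, go through; the remaining axioms are handled as in the paper.
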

\begin{proof}
Clearly $d_p(\sigma^{(q)},\sigma^{(q')})\geq 0$ for all $\sigma^{(q)}$ and $\sigma^{(q')}$, and this value is zero if and only if $\sigma^{(q)}=\sigma^{(q')}$. To prove $d_p(\sigma^{(q)},\sigma^{(q')})=d_p(\sigma^{(q')},\sigma^{(q')})$ one just needs to note that any $p$-walk between $\sigma^{(q)}$ and $\sigma^{(q')}$ gives a  $p$-walk between $\sigma^{(q')}$ and $\sigma^{(q)}$, so that $d_p(\sigma^{(q')},\sigma^{(q)})\leq d_p(\sigma^{(q)},\sigma^{(q')})$. A symmetric argument shows that $d_p(\sigma^{(q)},\sigma^{(q')})\leq d_p(\sigma^{(q')},\sigma^{(q')})$, thus $d_p(\sigma^{(q)},\sigma^{(q')})=d_p(\sigma^{(q')},\sigma^{(q')})$. Finally, the inequality 
$$d_p(\sigma^{(q)},\sigma^{(q'')})\leq d_p(\sigma^{(q)},\sigma^{(q')})+d_p(\sigma^{(q')},\sigma^{(q'')})$$ holds true since given a $p$-walk from $\sigma^{(q)}$ to $\sigma^{(q')}$ and a $p$-walk from $\sigma^{(q')}$ to $\sigma^{(q'')}$ one gets a $p$-walk from $\sigma^{(q)}$ to $\sigma^{(q'')}$.
\end{proof}

\begin{definition}
Given $\sigma^{(q)}\in K_p$, we define the $p$-eccentricity of $\sigma^{(q)}$ by:
$$\epsilon_p(\sigma^{(q)}):=\max_{\sigma^{(q')}\in K_p}d_p(\sigma^{(q)},\sigma^{(q')})$$
and the $p$-diameter of $K$ as:
$$D_p:=\max_{\sigma^{(q)}\in K}\epsilon_p(\sigma^{(q)})$$
\end{definition}

Let us denote by $\bar K^*_p=K_p/R_p=\bar K^*_{p,1}\coprod \dots \coprod \bar K^*_{p,Q^*_p}$ the maximal $p$-connected components of $K$, and let us denote by $Q^*_{p,l}$ the number of simplices in the maximal $p$-connected component $\bar K^*_{p,l}$ (for each $l=1,\dots Q^*_p$).
\begin{definition}
We define the average simplicial shortest $p$-walk length by:
$$l_p=\frac{2\sum_{i<j}d_p(\sigma_i^{(q_i)},\sigma_j^{(q_j)})}{Q^*_{p,l}\cdot (Q^*_{p,l}-1)}\,.$$
\end{definition}

Let us end this section by defining new generalisations of two very useful and commonly used centralities in complex networks: the closeness and betweenness centralities. 

\begin{definition}
Given $\sigma_i^{(q_i)}\in K_p$ we define the $p$-closeness centrality by:
$$C_{CL,p}(\sigma_i^{(q_i)}):=\frac{1}{\sum_{i\neq j}d_p(\sigma_i^{(q_i)},\sigma_j^{(q_j)})}$$
\end{definition}
\begin{remark}
We treat $\frac{1}{\infty}=0$. Under this convention, and following the standard literature, the centrality measure should have been referred as the harmonic $p$-closeness centrality.
\end{remark}

\begin{definition}
Given $\sigma_k^{(q_k)}\in \bar K^*_{p,l}\subseteq \bar K^*_p$, we define the $p$-betweenness centrality of a simplex $\sigma_k^{(q_k)}$ as:
$$C_{B,p}(\sigma_k^{(q_k)}):=\frac{2}{(Q^*_{p,l}-1)(Q^*_{p,l}-2)}\cdot \sum_{i\neq k \neq j}\frac{l_{ij,p}(\sigma_k^{(q_k)})}{l_{ij,p}}\,,$$ 
where $l_{ij,p}$ stands for the total number of shortest $p$-walks between $\sigma_i^{(q_i)}$ and $\sigma_j^{(q_j)}$, and $l_{ij,p}(\sigma_k^{(q_k)})$ denotes the total number of shortest $p$-walks between $\sigma_i^{(q_i)}$ and $\sigma_j^{(q_j)}$ passing through $\sigma_k^{(q_k)}$. 
\end{definition}

\begin{remark}
The term $\frac{(Q^*_{p,l}-1)(Q^*_{p,l}-2)}{2}$ is the number of pairs of simplices, different from $\sigma_k^{(q_k)}$, lying in the same maximal $p$-connected component $\bar K^*_{p,l}$ as $\sigma_k^{(q_k)}$.
\end{remark}

\subsection{Simplicial clustering: a centrality measure associated with both the higher order degree and the $p$-distance}\quad 

Let us finish this subsection by defining a clustering coefficient for $q$-simplices. In \cite{MRV08} a clustering coefficient of a $q$-simplex $\sigma^{(q)}$ is defined as follows:
\small{$$\frac{\mbox{number of faces that neighbour simplices to }\sigma^{(q)}\mbox{ share between themselves}}{\frac{z-1}{2}(\mbox{total number of faces that neighbour simplices can share between themselves)}}\,,$$}where $z$ denotes the number of neighbour simplices of $\sigma^{(q)}$. Since the number of $p$-faces of a $q'_i$-simplex  is given by the combinatorial number $\binom{q'_i+1}{p+1}$, which by equation (\ref{e:lowfaces}) is equal to de lower degree $\deg_L^{h,p}(\sigma^{(q'_i)})$, and the number $z$ of neighbour simplices of $\sigma^{(q)}$ can be defined with the upper degree of $\sigma^{(q)}$ of Definition \ref{d:hpUdeg}, then the clustering coefficient of \cite{MRV08} could be also written down with our definitions. Nice and closed formulas are also contained in that paper to explicitly compute this clustering coefficient. But let us point out that with this definition, if we have a triangle having two tetrahedra attached to a common vertex, then its clustering coefficient is non zero since the two  tetrahedra share a $0$-face (the vertex), but nonetheless one could think that this two tetrahedron communities are neighbours of the triangle but are not linked between themselves by a different agent to that of the common vertex, so that one might state that the clustering coefficient of the triangle should be zero. 

With this idea, instead of rewriting the clustering coefficient of \cite{MRV08} in terms of the higher order lower and upper degrees, we will give a different definition of simplicial clustering of a simplex, which generalises the standard clustering coefficient of a vertex by using both the definition of maximal $p$-adjacency degree and the strictly upper degree of a simplex.

In a graph network, the clustering coefficient of a vertex $v$ is defined as the ratio:
$$C(v)=\frac{\mbox{number of links among its neighbour vertices}}{\mbox{number of all possible links among its neighbour vertices}}\,,$$
that is:
$$C(v)=\frac{\mbox{number of links among its neighbour vertices}}{\frac{\deg(v)\cdot (\deg(v)-1)}{2}}\,,$$

In order to generalise this definition for a simplex in a simplicial network, let us start with the vertex case ($0$-simplex), and to explain what we mean by a neighbour of a $0$-simplex and what we allow as a link between neighbours of a $0$-simplex. 

\begin{definition}
We say that a $h$-simplex $\sigma^{(h)}$ is a maximal neighbour of a $0$-simplex $v$ if it is strictly $h$-upper adjacent to $v$, that is:
$$\sigma^{(h)} \mbox{ is a maximal neighbour of }v\,\iff \, \sigma^{(h)}\sim_{U_{h^*}}v\,.$$
\end{definition}

\begin{definition}
Let $\sigma^{(h)}$ and $\sigma^{(\bar h)}$ be two maximal neighbour simplices to the $0$-simplex $v$. We say that $\sigma^{(h)}$ and $\sigma^{(\bar h)}$ are linked if they share a $p$-face which is different from $v$.
\end{definition}

\begin{remark}
A maximal neighbour of a vertex $v$ is a maximal collaborative simplicial community to which the vertex $v$ belongs to. Maximal neighbour simplices of a vertex $v$ are maximal collaborative communities (to which the vertex belongs to) sharing a collaborative sub-community ($p$-face) which contains more vertices than the vertex $v$ itself.
\end{remark}

Let us now define a simplicial clustering coefficient for a $0$-simplex in a simplicial network.

\begin{definition}
The simplicial clustering coefficient of a $0$-simplex $v$ in $K$ is defined as the ratio:
$$C_S(v)=\frac{\mbox{number of links among its maximal neighbour simplices}}{\frac{\deg^*_U(v)\cdot(\deg^*_U(v)-1)}{2}}\,,$$
where $\deg^*_U(v):=\sum_{h=1}^{\dim K}\deg^{(h,h^*)}_U(v)$.
\end{definition}

\begin{figure}[!h]
\centering
\includegraphics[scale=0.7]{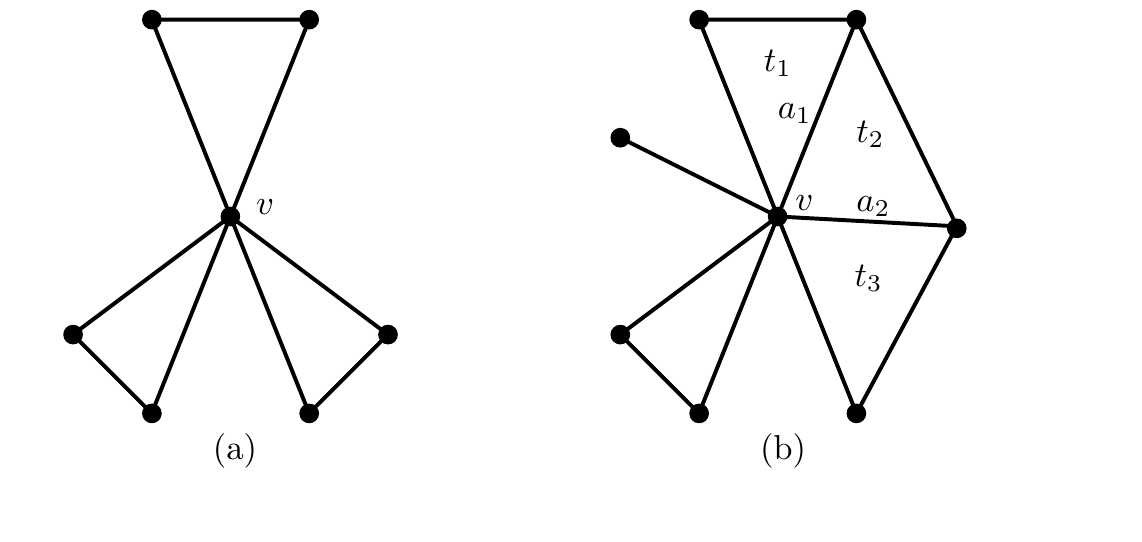}
\caption{Simplicial clustering coefficient of a vertex.}
\label{fig:SCv}
\end{figure}

\begin{example}
In Figure \ref{fig:SCv} (a) the graph clustering coefficient of the vertex $v$ is: 
$$C(v)=\frac{3}{\frac{\deg (v)\cdot (\deg (v)-1)}{2}}=\frac{3}{\frac{6\cdot 5}{2}}=\frac{1}{5}\,.$$
Nonetheless, its simplicial clustering coefficient is:
$$C_S(v)=\frac{0}{\frac{\deg^*_U(v)\cdot(\deg^*_U(v)-1)}{2}}=\frac{0}{\frac{3\cdot 2}{2}}=\frac{0}{3}=0$$
since $\deg^*_U(v)=\deg_U^{(1,1^*)}(v)+\deg_U^{(2,2^*)}(v)=0+3=3$ and $\deg_U^{(h,h^*)}(v)$ is the number of $h$-simplices strictly $h$-upper adjacent to $v$. Notice that none of the triangles which are upper adjacent to $v$ are linked by a face different from $v$, and this is why the numerator is $0$.

In Figure \ref{fig:SCv} (b) the graph clustering coefficient of the vertex $v$ is: 
$$C(v)=\frac{3}{\frac{\deg (v)\cdot (\deg (v)-1)}{2}}=\frac{4}{\frac{7\cdot 6}{2}}=\frac{4}{21}\,,$$
whereas the simplicial clustering coefficient of $v$ is:
$$C_S(v)=\frac{2}{\frac{\deg^*_U(v)\cdot(\deg^*_U(v)-1)}{2}}=\frac{2}{\frac{5\cdot 4}{2}}=\frac{1}{5}\,,$$
where $\deg^*_U(v)=\deg_U^{(1,1^*)}(v)+\deg_U^{(2,2^*)}(v)=1+4=5$, since: the strict upper degree $\deg_U^{(1,1^*)}(v)$ is the number of edges to which $v$ belongs to and such that they are not contained in a triangle (so there is only one); and the strict upper degree $\deg_U^{(2,2^*)}(v)$ is the number of triangles to which $v$ belongs to and such that they are not contained in a tetrahedron (thus, there are $4$, the triangles $t_1$, $t_2$ and $t_3$ and the one not marked). 

Since the triangles $t_1$ and $t_2$ are linked by the face $a_1$, and the triangles $t_2$ and $t_3$ are linked by the face $a_3$, we have that there exist $2$ links among the maximal neighbours of $v$ given by $a_1$ and $a_2$.

\end{example}

Let us consider the $q$-simplex case for $q>0$. In this situation, there exists lower adjacency and thus, the notion of maximal $p$-adjacency joint with the strict upper adjacency will suit for us. The reason is that, if one has an edge ($1$-simplex), there might be maximal simplicial communities which are maximal $0$-adjacent to the edge (lower adjacent to the edge in a vertex of the edge, but not upper adjacent to the edge), but there might also exist maximal simplicial communities on which our edge is nested in (that is, strict upper adjacent simplices to the edge). Thus, we have to consider both types as maximal neighbours.

\begin{definition}\label{d:maxneigh}
We say that a $q'$-simplex $\sigma^{(q')}$ is a maximal neighbour of a $q$-simplex $\sigma^{(q)}$ if either:
\begin{itemize}
\item there exists $p<q$ such that $\sigma^{(q')}$ is maximal $p$-adjacent to $\sigma^{(q)}$, that is $\sigma^{(q')}\sim_{A_{p^*}}\sigma^{(q)}$, or

\item $\sigma^{(q')}$ is strictly $q'$-upper adjacent to $\sigma^{(q)}$, that is $\sigma^{(q')}\sim_{U_{q'^*}}\sigma^{(q)}$.
\end{itemize}
\end{definition}

\begin{definition}\label{d:linkmax}
Let $\sigma^{(q'_i)}$ and $\sigma^{(q'_j)}$be two maximal neighbour simplices to $\sigma^{(q)}$. We say that $\sigma^{(q'_i)}$ and $\sigma^{(q'_j)}$ are linked if either:
\begin{itemize}
\item they share a face which is different from $\sigma^{(q)}$ and all of its faces, or
\item there exists a $0$-walk between them of $0$-distance $d_0(\sigma^{(q_i')},\sigma^{(q_j')})=2$, and such that it does not contain any of the faces of $\sigma^{(q)}$.
\end{itemize}
\end{definition}

\begin{remark}
The first condition in the above definition means that if the two maximal neighbour simplices to $\sigma^{(q)}$ share a face, then this face has to contain at least one vertex different from those vertices of $\sigma^{(q)}$. The second consists of allowing maximal neighbour simplices to $\sigma^{(q)}$, which do not share a face, to be connected by at least an edge which is not contained in $\sigma^{(q)}$. If there were more than one $0$-walk between two maximal neighbour simplices satisfying the second condition of the above definition, we will only count one link. 
\end{remark}

Let us now define a clustering coefficient for a simplex in a simplicial network.

\begin{definition}\label{d:simpclust}
The simplicial clustering coefficient of a $q$-simplex $\sigma^{(q)}$ in $K$ (with $q>0$) is defined as the ratio:
$$C_S(\sigma^{(q)})=\frac{\mbox{number of links among its maximal neighbour simplices}}{\frac{\deg^*(\sigma^{(q)})\cdot (\deg^*(\sigma^{(q)})-1)}{2}}\,,$$
where $\deg^*(\sigma^{(q)})=\deg^*_A(\sigma^{(q)})+\deg^*_U(\sigma^{(q)})$ is the maximal  simplicial degree of $\sigma^{(q)}$ given in Definition \ref{d:simpdeg}.
\end{definition}

\begin{remark}
Notice that we can compute the simplicial clustering coefficient of a simplex in a simplicial network by using the results of the previous section.
\end{remark}

This definition generalises the graph clustering definition for vertices in the following sense. 
\begin{enumerate}
\item Choose a $q$-simplex (the one wanted to know its simplicial clustering) and draw it as vertex $v$ in a new graph. 
\item Represent also as vertices of the new graph all its maximal neighbours simplices. 
\item Draw an edge from $v$ to other vertices representing maximal neighbour simplices to $\sigma^{(q)}$.
\item Draw also an edge among the neighbour vertices to $v$ whose associated maximal neighbour simplices are linked.
\end{enumerate}
Then, the clustering coefficient of $\sigma^{(q)}$ here defined is precisely the standard graph clustering coefficient of its associated vertex $v$ in the new graph.

\begin{figure}[!h]
\centering
\includegraphics[scale=1.1]{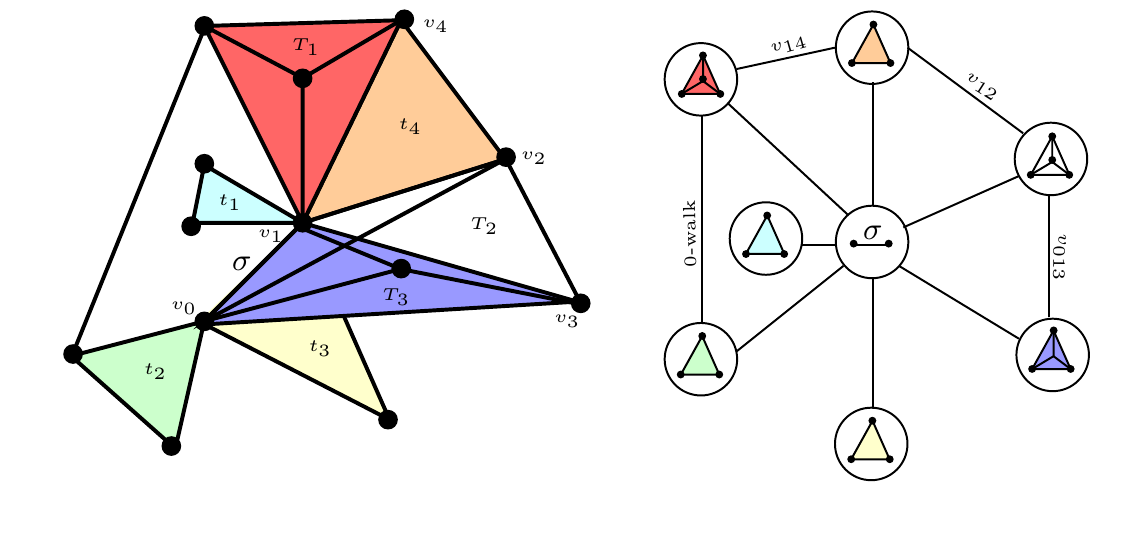}
\caption{Simplicial clustering coefficient of a $1$-simplex.}
\label{fig:SCq}
\end{figure}

\begin{example}
Let us compute the simplicial clustering coefficient of the $1$-simplex $\sigma$ defined by the vertices $v_0$ and $v_1$ in Figure \ref{fig:SCq} left. The general simplicial degree of $\sigma$ is computed in Example \ref{ex:AdjDeg}:
$$
\deg^*(\sigma)=\deg^*_A(\sigma)+\deg^*_U(\sigma)=7\,.
$$

Following Definition \ref{d:maxneigh}, the maximal neighbours of $\sigma$ are the traingles $t_1$, $t_2$ and $t_4$, the tetrahedron $T_1$ (all of them being maximal $0$-adjacent to $\sigma$), and the triangle $t_3$ and the tetrahedra $T_2$ and $T_3$ (being strictly upper adjacent to $\sigma$), Using Definition \ref{d:linkmax}, the links among the maximal neighbours of $\sigma$ are $4$ since:
\begin{itemize}
\item There exists a $0$-walk of $0$-distance $2$ between $T_1$ and $t_2$. 

\item There exists a common face between $T_1$ and $t_4$ given by the edge $v_{14}$ defined by the vertices $\{v_1,v_4\}$.
\item There exists a common face between $t_4$ and $T_2$ given by the edge $v_{12}=\{v_1,v_2\}$.
\item There exists a common face between $T_2$ and $T_3$ given by the triangle $v_{013}=\{v_0,v_1,v_3\}$.
\end{itemize}
Thus, the simplicial clustering coefficient of $\sigma$ is:

$$C_S(\sigma^{(q)})=\frac{\mbox{number of links among its maximal neighbour simplices}}{\frac{\deg^*(\sigma)\cdot (\deg^*(\sigma)-1)}{2}}=\frac{4}{21}\,.$$

Notice that this agrees with the standard graph clustering of $\sigma$ thought of as a vertex in Figure \ref{fig:SCq} right.
\end{example}

\begin{remark}
Depending on what kind of clustering we want to measure in our simplicial network, Definition \ref{d:simpclust} might be generalised by introducing a different notion of link using distinct $s$-distances quantities (also varying the parameter $s$, that is, the allowed $s$-walks). Notice also that when two maximal neighbour simplices to a given one are linked by sharing a face, no matters the dimension of the face is, we just understand it as a connection between them. Thus, weights might be needed for different applications. 
\end{remark}

\section{Conclusions}\label{s:conc}

We have used the recently introduced higher order degree definitions of a simplical community  given in \cite{HHS20} to propose new centrality measures in simplicial complex networks. The centrality measures based on the simplicial degree here presented would allow to understand the importance of simplicial communities in these type of networks and might be useful to understand topological and dynamical properties of complex systems. 
Some of their usefulness are revealed in \cite{HHS20}, where a simplicial connectivity study of several real-world datasets (which include coauthor networks, cosponsoring congress bills, contacts in schools, drug abuse warning networks, e-mail networks or publications and users in online forums) is performed by studying statistical properties and simplicial degree distributions, showing a rich and varied higher-order connectivity structures. We shall study its properties and propose configuration models in future works (following the standard results for graph networks of \cite{BA16} and some simplicial results for $d$-pure simplicial networks of \cite{BR02,BA16}). In addition, we have defined a eigenvector centrality measure for $q$-simplices depending on the higher order adjacency between them. It is based on the idea that a relationship to a more $p$-adjacent $q$-simplex contributes to the own centrality to a greater extent than a relationship to a less well interconnected $q$-simplex. Therefore this centrality measure is expected to prove useful for applications in simplicial networks which require to identify relevant groups, with respect to a fixed order of adjacency, among simplicial communities of the same size.

We have introduced the notions of walks and distances in simplicial complexes, which have allowed us to generalise, to the simplicial case, the closeness and betweenness centralities. We believe that a way is opened to study how information flows in a simplicial network through its simplicial communities. 

We have also define a simplicial clustering coefficient in a network, different from those already known, which generalises the standard graph clustering of a vertex in a graph network, and allows to know not only the simplicial clustering around a node, but the clustering around a simplicial community. Thus, this simplicial clustering could be a useful tool to understand the topology of a simplicial network in terms of the density of its simplicial communities, and it would be interesting to relate it and/or compare it with the first two betti numbers of the simplicial network (usually denoted $\beta_0$ and $\beta_1$),  following the ideas of \cite{BK19}. Moreover, we believe that it might help to understand how cliques and simplicial communities evolve in a simplicial network and to study analogues to clique percolation techniques in simplicial networks.

Finally, let us point out that very recently the contagion phenomena (which include the information and rumor diffusion, epidemic spreading and computer virus transmission) is starting to be studied in the context of higher order interactions among constituents (see \cite{JJK19} for the hypergraphs case or \cite{IPBL19} for the simplicial complex case), and thus it would be interesting to adapt the centrality indices studied, for example the improved gravitational centrality of \cite{WLX18}, to the simplicial case in order to study the spreading capability in simplicial complex systems.

\end{document}